\newtheorem{theorem}{Theorem}[section]
\newtheorem{lemma}[theorem]{Lemma}
\newtheorem{proposition}[theorem]{Proposition}
\newtheorem{definition}[theorem]{Definition}
\newtheorem{corollary}[theorem]{Corollary}
\newtheorem{remark}[theorem]{Remark}
\newcommand\RR{{\mathbb{R}}}
\newcommand\CC{{\mathbb{C}}}
\newcommand\NN{\mathbb{ N}}
\newcommand\ZZ{\mathbb{ Z}}
\newcommand{\TT}{\mathbb{ T}}
\renewcommand{\leq}{\leqslant}
\def\({\left(}
\def\){\right)}
\def\[{\left[}
\def\]{\right]}
\def\={\quad = \quad}
\def\+{\quad + \quad}
\def\R{{\mathbb{R}}}
\def\Z{{\mathbb{Z}}}
\def\NN{{\mathbb{N}}}
\def\C{{\mathbb{C}}}
\def\T{{\mathbb{T}}}
\def\em{\endmatrix\right]}
\def\N{\mathbb N}
\def\n{\mathfrak{n}}
\def\ff{if and only if }
\def \n {\mathfrak n}
\def \eO {\eusm O}
\def \a {\alpha}
\def \l {\lambda}
\def \z {\mathfrak z} 
\def \p {\mathfrak p}
\def \eusm {\mathcal}
\def \G {\Gamma}
\def \H {\mathcal H}
\def \S {\mathcal S}
\newcommand{\note}[2][\null]{%
  \marginpar{\renewcommand{\baselinestretch}{1}\vspace{-1em}\hrule\vspace{3pt}%
  \footnotesize\raggedright\textsf{#2\ifx#1\null\else\\\hfill--- 
  {\em #1}\fi}\vspace{1.5em}}%
}
\begin{document}
\title{ Shift-invariant  spaces on $SI/Z$   Lie groups}
\author{ by 
Bradley Currey, Azita Mayeli, Vignon Oussa }

\date{\today}

\maketitle 
\begin{abstract}
Given a simply connected nilpotent Lie group having unitary irreducible representations that are square-integrable modulo the center (SI/Z), we develop a notion of periodization on the group Fourier transform side, and use this notion to give a characterization of shift-invariant spaces in $L^2(N)$ in terms of range functions. We apply these results to study the structure of frame and Reisz families for shift-invariant spaces.  We illustrate these results for the Heisenberg group as well as for other groups with SI/Z representations. 
\end{abstract}

keywords:{ \it Shift-invariant spaces, translation invariant spaces, 
nilpotent Lie groups;  range function; periodization operator; fibers; frame and Reisz bases; }

\section{introduction}
 
Let $G$ be a locally compact abelian (LCA) group, and let $H$ be a discrete subgroup of $G$ for which $G/H$ is compact. For a function $\phi : G \rightarrow \C$ write $L_g\phi = \phi(g^{-1} \cdot), g \in G$. A shift invariant space (SIS) is a closed subspace $\H$ of $L^2(G)$ that is invariant under the action of $H$ by the (unitary) operators $L_h$. The case where $G = \R^d$ has been studied extensively in the literature and plays a central role in the development of approximation theory and nonorthogonal expansions; see for example \cite{BVR1, BVR2, B}. The main idea is due to Helson \cite{H}, whereby, via periodization, an SIS  $\H$ corresponds to a measurable {\it range function}. For general LCA groups, this idea is extended very successfully in \cite{CP} to give a characterization of SIS spaces exactly as in the Euclidean case; see also \cite{KR}. 

In this article we generalize the study of SIS in a different direction, to a natural class of non-abelian Lie groups called SI/Z groups. Let $N$ be a connected, simply connected nilpotent Lie group. Following \cite{CORW}, we say that $N$ is an SI/Z group if almost all of its irreducible representations are square-integrable modulo the center of the group. The effect of the SI/Z condition condition is that for this class of groups, the operator-valued Plancherel transform retains certain key features of the Euclidean case, and in particular, makes it possible to define a useful notion of periodization. Thus, techniques of representation theory, operator theory, spectral methods, Fourier analysis, and approximation theory, are related in a natural setting. We remark that the class of SI/Z groups is broad and in particular contains groups of an arbitrarily high degree of non-commutativity.

 
 The outline of the paper is as follows. After the introduction, we introduce the class of SI/Z groups and some facts about these groups in Section \ref{Notations and Preliminaries}. In Section \ref{shift-invariant spaces} we study shift-invariant subspaces for our setting and their characterization in terms of range functions using group Fourier transform techniques. We apply these results to characterize bases for shift-invariant subspaces in Section \ref{Frames and Bases}. We illustrate our results on some examples of such groups, including the Heisenberg group, in Section \ref{Examples and Applications}.

  \section{Notations and Preliminaries}\label{Notations and Preliminaries}

  Let $N$ be a connected, simply connected nilpotent Lie group with Lie algebra $\n$. Recall that $N$ acts naturally on $\n$ by the adjoint representation and on the linear dual  $\n^*$ of $\n$ by the coadoint representation. We denote these actions multiplicatively, and for $l \in \n^*$, denote by $N(l)$ the stabilizer of $l$ and $\n(l)$ its Lie algebra. The exponential mapping $\exp : \n \rightarrow N$ is a bijection, the center $\z$ of the Lie algebra of $\n$ is non-trivial, and $Z = \exp \z$ is the center of $N$. We now recall certain classical results concerning the unitary representations of such groups originally due to J. Dixmier, A. Kirillov, and L. Pukanszky; citations for their work can be found in standard reference \cite{CORW}.
  
  Let $\pi$ be an irreducible unitary representation of $N$. Then there is an analytic subgroup $P$ of $N$, and a unitary character $\chi$ of $P$, such that $\pi$ is unitarily isomorphic with the representation induced by $\chi$. Writing $P = \exp \p$ and $\chi(\exp Y) = e^{2\pi i l(Y)}$ where $l \in \n^*$, we have $[\p,\p] \subseteq \ker l$. On the other hand, given any $l\in \n^*$, a subalgebra $\p$  of $\n$ is said to be subordinate to $l$ if $[\p,\p] \subseteq \ker l$, and in this case $l$ defines a character $\chi_l$ of $P = \exp \p$ as above.  The representation induced from $\chi_l$ is irreducible if and only if $\p$ is has maximal dimension among subordinate subalgebras, and if $\p$ and $\p'$ are both maximal subordinate subalgebras for $l$, then the induced representations are isomorphic. It follows that there is a canonical bijection between the space $\hat N$ of equivalence classes of irreducible representations of $N$, and the quotient space $\n^* / N$ of coadjoint orbits in $\n^*$. This bijection is a Borel isomorphism, and the Plancherel measure class for $N$ is supported on the (dense, open) subset $\hat N_{\text{max}}$ of $\hat N$ corresponding to the coadjoint orbits of maximal dimension. 
 For an irreducible representation $\pi$ of $N$, we denote the coadjoint orbit corresponding to the equivalence class of $\pi$ by $\eO_\pi$.

  An  unitary irreducible representation of a unimodular Lie group $N$ is said to be {\it square integrable}, or a {\it discrete series representation},  if $s \mapsto \langle \pi(s)u,v\rangle$ belongs to $L^2(N)$ for any vectors $u$ and $v$. If  $N$ is simply connected nilpotent, then the fact that the center is non-trivial implies that there are no discrete series representations. An irreducible representation $\pi$  is {\it square-integrable modulo the center} if 
   $$
\int_{N/Z}\left\vert \left\langle \pi\left(  n\right)  u,v\right\rangle
\right\vert ^{2}d\dot n  <\infty.
$$
We denote by $SI/Z$ the subset of $\hat N$ consisting of those (equivalence classes of) irreducible representations that are square-integrable modulo the center. For many unimodular groups the set $SI/Z$ is empty. In the case where $N$ is nilpotent simply connected, there is an orbital characterization of $SI/Z$-representations, first proved in \cite{MW} and also found in \cite{CORW}. 

\begin{theorem} Let $\pi$ be an irreducible representation of a connected, simply connected nilpotent Lie group $N$. Then the following are equivalent. 

\begin{itemize}

\item[(i)] $\pi$ is square-integrable modulo the center. 

\item[(ii)] For any $l \in \eO_\pi$, $ \n(l) = \z$ and $\eO_\pi = l + \z^\perp$.
\end{itemize}
Moreover, if $SI/Z \ne \emptyset$, then 
$SI/Z = \hat N_{\text{max}}$.  

\end{theorem}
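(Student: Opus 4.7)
The plan is to work via the orbital realization $\pi = \pi_l = \mathrm{Ind}_P^N \chi_l$ on $L^2(N/P)$ for a polarizing (maximal subordinate) subalgebra $\p$ of $l$, and to reduce the two conditions in (ii) to a single dimension statement.

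First, two structural observations valid for \emph{any} irreducible $\pi$: since $\z$ is central, $\mathrm{Ad}(n^{-1})X = X$ for $X \in \z$, so $(\mathrm{Ad}^*(n)l)(X) = l(X)$ for all $X \in \z$, which forces $\eO_\pi \subseteq l + \z^\perp$; moreover, the radical of the skew form $B_l(X,Y) = l([X,Y])$ on $\n$ is exactly the stabilizer Lie algebra $\n(l)$, and this always contains $\z$. In particular $\dim \eO_\pi = \dim \n - \dim \n(l) \leq \dim \z^\perp$, with equality iff $\n(l) = \z$. Since coadjoint orbits of a simply connected nilpotent Lie group are closed in $\n^*$ (a classical Kirillov--Pukanszky fact), a full-dimensional orbit inside the connected affine subspace $l + \z^\perp$ must equal that subspace. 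Hence the two conditions in (ii) collapse to the single condition $\n(l) = \z$.

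For (ii) $\Rightarrow$ (i): since $\n(l) = \z$, I choose a polarization $\p$ containing $\z$, with $\dim \p = \dim \z + (\dim \z^\perp)/2$. Using a weak Malcev basis through $\z \subseteq \p \subseteq \n$, I parametrize $N/P$ by $\R^d$ where $2d = \dim \z^\perp$, and $P/Z$ by another $\R^d$. Expanding the matrix coefficient $c_{u,v}(n) = \langle \pi(n)u,v\rangle$ in these coordinates and integrating $|c_{u,v}|^2$ over $P/Z$ first, the character $\chi_l$ on $P$ produces an oscillatory factor whose Plancherel identity on the abelian group $P/Z \cong \R^d$ collapses the integral to $\|u\|^2 \|v\|^2$ times a nonzero Pfaffian-type Jacobian; the remaining integration over $N/P$ is finite, so $c_{u,v} \in L^2(N/Z)$. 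The converse, (i) $\Rightarrow$ (ii), is the main obstacle: assuming $\n(l) \supsetneq \z$, pick $X \in \n(l) \setminus \z$. The one-parameter subgroup $\exp tX$ lies in the coadjoint stabilizer $N(l)$, and a direct analysis in the polarized realization shows that $|c_{u,v}|$, viewed on $N/Z$, is invariant along the noncompact image of this subgroup; hence $|c_{u,v}|$ cannot lie in $L^2(N/Z)$ unless it vanishes identically, contradicting $c_{u,u}(e) = \|u\|^2 > 0$ for nonzero $u$.

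The \emph{moreover} statement then follows formally: if $SI/Z \neq \emptyset$, then by (i) $\Rightarrow$ (ii) some orbit has dimension $\dim \z^\perp$, which must therefore be the maximal coadjoint orbit dimension; $\hat N_{\mathrm{max}}$ consists of the orbits attaining this maximum, i.e., those with $\n(l) = \z$, and by (ii) $\Rightarrow$ (i) each such class lies in $SI/Z$. I expect the delicate point to be the invariance argument in (i) $\Rightarrow$ (ii), which requires careful bookkeeping of how $N(l)$ acts in the induced picture; the cleanest route is to compute the $L^2(N/Z)$-norm of a single matrix coefficient explicitly in the polarized realization and read off directly that the integrand is independent of the $\n(l)/\z$-directions.
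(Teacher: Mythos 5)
First, a point of reference: the paper does not prove this theorem at all --- it is quoted from Moore--Wolf \cite{MW} (see also \cite{CORW}) --- so there is no internal argument to compare yours against, and your attempt must stand on its own. Your preliminary reductions are correct and cleanly organized: $\eO_\pi \subseteq l + \z^\perp$ always holds, $\mathrm{rad}\, B_l = \n(l) \supseteq \z$, and since coadjoint orbits of simply connected nilpotent groups are closed, condition (ii) collapses to the single condition $\n(l) = \z$. The ``moreover'' clause does follow formally once (i)$\Leftrightarrow$(ii) is established. The (ii)$\Rightarrow$(i) direction is the standard and correct outline, but note that $P/Z$ need not be an abelian group (it is only diffeomorphic to $\R^d$), and the nonvanishing of the ``Pfaffian-type Jacobian'' is precisely the content to be proved there, so that direction is a sketch rather than a proof.

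The genuine gap is in (i)$\Rightarrow$(ii), exactly where you flagged uncertainty. Your argument requires $|c_{u,v}(n\exp tX)| = |c_{u,v}(n)|$ for $X \in \n(l)\setminus \z$, which amounts to $\pi(\exp tX)$ being scalar. But $\n(l)$ stabilizes only the single point $l$, not the whole orbit: $\pi(\exp tX)$ is scalar only when $X \in \bigcap_{l' \in \eO_\pi} \n(l')$, and this intersection can equal $\z$ even when $\n(l) \supsetneq \z$. Concretely, take the $4$-dimensional filiform algebra with basis $X, Y_1, Y_2, Y_3$, brackets $[X,Y_1]=Y_2$, $[X,Y_2]=Y_3$, center $\z = \R Y_3$, and $l = Y_3^*$. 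Then $\n(l) = \mathrm{span}\{Y_1, Y_3\} \supsetneq \z$, yet in the realization on $L^2(\R)$ induced from $\p = \mathrm{span}\{Y_1,Y_2,Y_3\}$ the operator $\pi_l(\exp tY_1)$ is multiplication by the chirp $e^{\pi i t x^2}$, not a scalar; taking $u=v$ a Gaussian gives $|c_{u,u}(\exp tY_1)| = c\,(4+t^2)^{-1/4}$, which is not constant in $t$. Moreover $\bigcap_{l'}\n(l') = \z$ in this example, so no cleverer choice of $X$ restores the invariance --- the strategy itself fails, not just its execution. A correct route is either an induction on $\dim N$ via Kirillov's lemma, or the observation that square-integrability modulo $Z$ forces $\pi$ to occur as an atom in the decomposition of $\mathrm{Ind}_Z^N(\pi|_Z)$, whose spectral measure is the pushforward of Lebesgue measure on $l + \z^\perp$ under the orbit projection; an atom forces $\eO_\pi$ to have positive measure in $l+\z^\perp$, hence full dimension, hence $\n(l)=\z$.
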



We say that a connected simply connected nilpotent Lie group $N$ is an $SI/Z$ group if  $SI/Z = \hat N_{\text{max}}$. We remark that for each positive integer $s$, there is a  $SI/Z$ group $N$ of step $s$ (see  \cite{CORW}). The abelian case is just $\R^n$, and the simplest two-step example is the Heisenberg group, where $\hat N_{\text{max}}$ consists of the Schr\"odinger representations. From now on, we will assume that $N$ is a $SI/Z$ group.


The preceding shows that $\hat N_{\text{max}}$ is parametrized by a subset of $\z^*$. Indeed, let $\pi \in \hat N_{\text{max}}$. Then $\dim \eO_\pi  = n - \dim \z$, and since $\eO_\pi$ is naturally a symplectic manifold, its dimension is even and we write $\dim \eO_\pi = 2d$. Then Schur's Lemma says that the restriction of $\pi$ to $Z$ is a character of $Z$, and hence $\pi$ determines a unique element $\l = \l_\pi \in \z^*$ so that 
$$
\pi(z) = e^{2\pi i \langle \l,\log z\rangle} I,
$$
where $I$ is the identity operator for the Hilbert space of a realization of $\pi$.  It follows that $\eO_\pi = \{ l \in \n^* : l |_\z = \l\}$. The description of orbits for $\hat N_{\text{max}}$ shows that $\pi \mapsto \l_\pi$ is injective.

An explicit Plancherel transform is obtained by describing the set $\Sigma = \{ \l_\pi : \pi \in \hat N_{\text{max}} \}$, and explicit maximal subordinate subalgebras $\p(\l), \l \in \Sigma$, that vary smoothly with $\l$.
Fix a basis $\{X_1, X_2, \dots , X_n\}$ for $\n$ for which  $\n_j := \text{span}\{X_1, \dots , X_j\}$ is an ideal in $\n$, $1 \le j \le n$, and for which $\n_r = \z$ for some $r$. Having fixed such a basis, we identify $\z$ with $\R^r$ and  $\z^*$ with the subspace $\{ l\in \n^* : l(X_j) = 0, r < j \le n\}$. 
Define $\mathbf{Pf}(l), l \in \n^*$ so that 
$|Pf(l)|^2 = \text{det} M(l)$
where $M(l)$ is the skew-symmetric matrix
$$
M(l) = \bigl[ l [X_i,X_j]_{r \le i,j \le n}\bigr].
$$
The following can be gleaned from \cite{Puk} and \cite{MW}; again a useful reference is \cite{CORW}. 

\begin{proposition} For an $SI/Z$ group, we have the following. 

\begin{itemize}

\item[(i)] $\mathbf{Pf} $ is constant on each coadjoint orbit and $[\pi ] \in SI/Z$ \ff $\mathbf{Pf}(l) \ne 0$ on $\eO_\pi$. 

\item[(ii)] $\Sigma := \{ \l \in \z^* : \mathbf{Pf}(\l) \ne 0\}$ is a cross-section for coadjoint orbits of maximal dimension.

\item[(iii)] Fix $\l \in \Sigma$. Then
$$
\p(\l) := \sum_{j=1}^n \n_j(\l |_{\n_j})
$$
is a maximal subordinate subalgebra for $\l$ and the corresponding induced representation $\pi_\l$ is realized naturally in $L^2(\R^d)$. 

\item[(iv)] For $\phi \in L^1(N) \cap L^2(N)$, the Fourier transform
$$
\hat\phi(\l) := \int_N \phi(x) \pi_\l(x) dx, \ \l \in \Sigma,
$$
implements an isometric isomorphism -- the Plancherel transform --
$$
\mathcal F : L^2(N) \rightarrow L^2\bigl(\z^*, \mathcal {HS}(L^2(\R^d)), |\mathbf{Pf}(\l)| d\l\bigr)
$$
where $ \mathcal {HS}(L^2(\R^d))$ is the Hilbert space of Hilbert-Schmidt operators on $L^2(\R^d)$, and $d\l$ is a suitably normalized Lebesgue measure on $\z^*$ . 

\end{itemize}

\end{proposition}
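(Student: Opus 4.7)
The plan is to verify each of the four items by combining Theorem~2.1 with the classical Kirillov--Pukanszky theory of coadjoint orbits, polarizations, and Plancherel formulas for nilpotent Lie groups.

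For (i), I would first check that $|\mathbf{Pf}|$ is constant on each coadjoint orbit. Since $\{X_{1},\dots,X_{n}\}$ is a Jordan--H\"older basis, the matrix of $\mathrm{Ad}(n)$ in this basis is unipotent with determinant $1$; moreover, since $\z=\n_{r}$ is an ideal, $\mathrm{Ad}(n)$ descends to a unipotent automorphism $\bar A$ of $\n/\z$ in the induced basis $\{X_{r+1},\dots,X_{n}\}$. A direct computation gives $M(n\cdot l)=\bar A^{T} M(l)\bar A$, so $\det M(n\cdot l)=(\det\bar A)^{2}\det M(l)=\det M(l)$. For the equivalence with the $SI/Z$ condition, observe that $\n(l)$ is the radical of the skew form $B_{l}(X,Y)=l([X,Y])$ on $\n$, and $\z\subseteq\n(l)$ always; passing to $\n/\z$, the matrix of the induced form is exactly $M(l)$, so $\n(l)=\z$ iff $M(l)$ is nonsingular iff $\mathbf{Pf}(l)\ne 0$. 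Theorem~2.1 then gives the claimed equivalence with $[\pi]\in SI/Z$.

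For (ii), I combine (i) with Theorem~2.1: for $\pi\in SI/Z$ and $l\in\eO_{\pi}$, the orbit equals $l+\z^{\perp}$, so every element of the orbit has the same restriction $\l:=l|_{\z}\in\z^{*}$, and distinct maximal orbits yield distinct restrictions. Hence the restriction map is a bijection between maximal-dimensional orbits and its image in $\z^{*}$. Identifying $\z^{*}$ with $\{l\in\n^{*}:l(X_{j})=0,\ j>r\}$, the representative $\l\in\z^{*}$ of an orbit satisfies $\mathbf{Pf}(\l)=\mathbf{Pf}(l)$ by the constancy in (i), so the image is exactly $\Sigma$.

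For (iii), I would invoke Vergne's theorem: for any $l\in\n^{*}$ the sum $\p(l)=\sum_{j=1}^{n}\n_{j}(l|_{\n_{j}})$ is a subordinate subalgebra of maximal dimension. In the $SI/Z$ case $\dim\p(\l)=\dim\z+d=r+d$, so $\dim(N/P(\l))=d$; choosing a supplementary basis to $\p(\l)$ in $\n$ and using the exponential diffeomorphism identifies $L^{2}(N/P(\l))$ with $L^{2}(\R^{d})$, giving the stated realization of $\pi_{\l}=\mathrm{Ind}_{P(\l)}^{N}\chi_{\l}$. For (iv), I would then apply the Kirillov--Pukanszky Plancherel theorem in the smooth form given in \cite{CORW}: the Plancherel measure is supported on $\Sigma$, equals $|\mathbf{Pf}(\l)|\,d\l$ for a suitably normalized Lebesgue measure, and the operator-valued Fourier transform $\phi\mapsto\widehat\phi$ extends to the claimed isometric isomorphism.

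Main obstacle: every ingredient is classical, so no new technical difficulty arises; the only substantive verification is the congruence $M(n\cdot l)=\bar A^{T}M(l)\bar A$ in (i). The real work is bookkeeping -- matching the smooth parametrizations of $\hat N_{\text{max}}$ by $\Sigma$ and of polarizations by $\l\mapsto\p(\l)$ with the standard references (Pukanszky, Moore--Wolf, \cite{CORW}) so that the Plancherel formula in (iv) comes out with the explicit density $|\mathbf{Pf}(\l)|$ rather than some implicit Jacobian.
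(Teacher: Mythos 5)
The paper offers no proof of this proposition at all---it is presented as a compilation of classical facts ``gleaned from'' Pukanszky, Moore--Wolf, and Corwin--Greenleaf---and your sketch correctly assembles exactly those ingredients: the congruence $M(n\cdot l)=\bar A^{T}M(l)\bar A$ with $\bar A$ unipotent (so $\text{det}\,M$ is orbit-invariant), the identification of $\n(l)$ with the radical of $B_{l}(X,Y)=l([X,Y])$ so that $\n(l)=\z$ iff $\mathbf{Pf}(l)\ne 0$, Vergne polarizations for (iii), and the Kirillov--Pukanszky Plancherel formula for (iv). The only step you leave tacit is that reducing the $SI/Z$ criterion to the single condition $\n(l)=\z$ uses the (true, and part of Moore--Wolf) fact that $\n(l)=\z$ already forces $\eO_\pi=l+\z^{\perp}$, since the coadjoint action fixes $l|_{\z}$, so the orbit sits inside $l+\z^{\perp}$ and, being closed of full dimension $n-r$, must fill it.
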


    \section{Shift invariant spaces}\label{shift-invariant spaces}

 Let $N$ be an $SI/Z$ group.   
   We retain the notations of the preceding section, and recall the basis $\{X_1, X_2, \dots , X_n\}$ chosen above. Identify the center $\z$ of $\n$ with $\R^r$ via the ordered basis $\{X_1, X_2, \dots , X_r\}$ ($r = n - 2d$), and identify the center $Z$ of $N$ with $\R^r$ by 
   $$
z = \exp z_1 X_1 \exp z_2 X_2 \cdots \exp z_r X_r, \ (z_1,z_2, \dots , z_r) \in \R^r.
$$
    Write 
$$
x = \exp x_1 X_{r+1} \exp x_2 X_{r+2} \cdots \exp x_{2d} X_n, \  (x_1,x_2, \dots , x_r) \in \R^{2d}
$$
so that the subset $\mathcal X =\exp \R X_{r+1} \exp \R X_{r+2} \cdots \exp\R X_n$ of $N$ is identified with $\R^{2d}$. For each $n\in N$, we have unique $x \in \mathcal X$ and $z \in Z$  such that $n = xz$ and $N$ is thus identified with $ \R^{2d} \times \R^r$. The dual $\z^*$ is identified with $\R^r $ via the dual basis $\{X_1^*, \dots , X_r^*\}$.  

Let $\phi \in L^1(N)\cap L^2(N)$. For $x \in \mathcal X$ and $z \in Z$, the Plancherel transform of the left translate $L_{xz}\phi$ is the operator-valued function on $\Sigma$ defined at each $\l \in \Sigma$ by 
$$
\mathcal F{\bigl(L(xz) \phi\bigr)}(\l) = e^{2\pi i \langle \l,  z\rangle} \pi_\l(x) \hat\phi(\l).
$$
We denote the $r$ dimensional torus by $\mathbb{T}^r$ and identify it with $[0, 1)^r$ in $\R^r$ as usual. 
For simplicity of notation we write $\mathcal H = \mathcal {HS}(L^2(\R^d))= L^2(\RR^d)\otimes L^2(\RR^d)^*$, and let $\mathcal{L}$ denote  the Hilbert space
$l^{2}\left(  \mathbb{Z}^{r},\mathcal{H}  \right)$ with the norm  
$$
\left\Vert h\right\Vert _{\mathcal{L}}^{2}=\sum_{j\in\mathbb{Z}^{r}%
}\| h_j\|^2_{\mathcal{H} }.
$$

Define the periodized Plancherel transform map $T:L^{2}\left(  N\right)  \rightarrow L^{2}\left(  \mathbb{T}^r, \mathcal{L}%
\right)  $ as follows. Denote by $\mathcal C$ the set of all $\mathcal H$-valued $L^2$-functions on $\R^r$ that are continuous on each set $\T^r + j, j \in \Z^r$. Note that $\mathcal C$ determines a subspace of $L^2(\R^r, \mathcal H)$, and we denote this subspace also by $\mathcal C$. Given $f \in \mathcal C$, 
 define the sequence-valued function $a_f$ on $\T^r$ by 
$$
a_f(\sigma) = \bigl( f(\sigma + j)\bigr)_{j \in \Z^r} ,\quad \sigma\in \TT^r. 
$$
Then it is easy to check that $a_f \in C(\T^r, \mathcal L)$ and we have
\begin{equation}\label{per}
\int_{\R^r} \| f(\l)\|^2 d\l = \sum_{j \in \Z^r} \int_{\T^r} \|f(\sigma+ j)\|_{\mathcal H}^2 d\sigma = \int_{\T^r} \sum_{j \in \Z^r}\|a_f(\sigma)_j\|_{\mathcal H}^2 d\sigma . 
\end{equation}
 
Hence for a.e. $\sigma \in \T^r$, $a_f(\sigma) \in \mathcal L$, the function $a_f$ is an $L^2$, $\mathcal L$-valued function on $\T^r$, and $\| f\| = \| a_f\|$.  $\mathcal C$ is dense in $L^2(\R^r, \mathcal H)$, and so we   extend this mapping to an isometry $A : L^{2}( \R^r, \mathcal{H} )\rightarrow L^2(\T^r, \mathcal L)$ given by $f\mapsto Af(\sigma)=a_f(\sigma)$. 
 Recalling the Pfaffian $\mathbf{Pf}\left(  \lambda\right)$ and identifying $\z^*$ with $\RR^{r}$, let $M : L^{2}( \RR^{r}, \mathcal{H}, |\mathbf{Pf}(\l)| d\l ) \rightarrow  L^{2}( \R^r, \mathcal{H} )$ be the natural isometric isomorphism given by
 $$
M f(\l) = f(\l) | \mathbf{Pf}(\l) | ^{1/2}
$$
and put $T = A \circ M \circ \mathcal F$. The diagram below displays the composition of isometries as described.
$$
L^{2}\left( N\right) \overset{\mathcal{F}}{\rightarrow }L^{2}\left( 
 \RR^{r},\mathcal{H},\mathbf{Pf}\left( \lambda \right) d\lambda
\right) \overset{M}{\rightarrow }L^{2}\left( \RR^{r},\mathcal{H}%
,d\lambda \right) \overset{A}{\rightarrow }L^{2}\left( \mathbb{T}^{r},%
\mathcal{L}\right).
$$
 Finally, for $n \in N$, we define the unitary operator  $\tilde\pi_\sigma(n) :  \mathcal L \rightarrow  \mathcal L$ by 
$$
\bigl(\tilde\pi_\sigma(n)h\bigr)_j = \pi_{\sigma+j}(n) \circ h_j, \ \ h \in \mathcal L
$$
and define $\tilde\pi(n) : L^2(\T^r, \mathcal L) \rightarrow L^2(\T^r, \mathcal L)$ by 
$$
\bigl(\tilde\pi(n)a\bigr)(\sigma) =\tilde\pi_{\sigma}(n)a(\sigma), \ \ a \in L^2(\T^r, \mathcal L).
$$
Note that if $z\in Z$, then $\bigr(\tilde\pi(z)a\bigl)(\sigma) = e^{2\pi i \langle\sigma, z\rangle}  a(\sigma)$  for all  $ a\in L^2(\T^r, \mathcal L)$. 
We have almost already proved the following. 
\begin{lemma}
The mapping $T$ is an isometric
isomorphism, and for each $n\in N$, 

\begin{equation} \label{intertwining-translation-1}
T \bigl(L_{n}\phi\bigr)(\sigma) = \bigl(\tilde\pi(n) T\phi\bigr)(\sigma).
\end{equation}
\end{lemma}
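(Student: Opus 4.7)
The plan is to assemble the isometric-isomorphism claim from the three factors, and then to derive the intertwining identity by chasing the translation formula through each factor.

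First I would verify that each of the three maps in the composition $T = A \circ M \circ \mathcal F$ is an isometric isomorphism. That $\mathcal F$ is such is exactly Proposition 2.2(iv). For $M$, observe that multiplication by $|\mathbf{Pf}(\l)|^{1/2}$ converts the weighted norm on the source into the unweighted norm on the target; the zero set of $\mathbf{Pf}$ is the zero set of a polynomial and hence has Lebesgue measure zero, so $M$ has a two-sided inverse given by division. For $A$, equation (1) shows that the prescription $f \mapsto a_f$ is norm-preserving on the dense subspace $\mathcal C$, and hence extends to an isometry; surjectivity is immediate since for any $a \in L^2(\T^r, \mathcal L)$ one defines $f$ by $f(\sigma + j) = a(\sigma)_j$ for $\sigma\in\T^r, j \in \Z^r$ and recovers $a_f = a$. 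Composing yields that $T$ is an isometric isomorphism.

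Next I would establish the intertwining identity. Write $n = xz$ with $x\in\mathcal X$ and $z\in Z$. The displayed formula just above the statement of the Lemma gives
$$
\mathcal F\bigl(L_n\phi\bigr)(\l) \= e^{2\pi i \langle \l,z\rangle}\, \pi_\l(x)\, \hat\phi(\l).
$$
Applying $M$ and then $A$, I would compute, for $\sigma\in\T^r$ and $j\in\Z^r$,
$$
T(L_n\phi)(\sigma)_j \= e^{2\pi i \langle \sigma + j,\,z\rangle}\, \pi_{\sigma+j}(x)\, \bigl(M\mathcal F\phi\bigr)(\sigma+j) \= e^{2\pi i \langle \sigma + j,\,z\rangle}\, \pi_{\sigma+j}(x)\, (T\phi)(\sigma)_j.
$$
On the other hand, using $\pi_{\sigma+j}(n) = \pi_{\sigma+j}(x)\,\pi_{\sigma+j}(z)$ together with the fact that the restriction of $\pi_{\sigma+j}$ to $Z$ is the scalar character $z \mapsto e^{2\pi i \langle \sigma + j,\, z\rangle}I$, the definition of $\tilde\pi(n)$ yields
$$
\bigl(\tilde\pi(n) T\phi\bigr)(\sigma)_j \= \pi_{\sigma+j}(n)\,(T\phi)(\sigma)_j \= e^{2\pi i\langle\sigma+j,\,z\rangle}\,\pi_{\sigma+j}(x)\,(T\phi)(\sigma)_j,
$$
which matches the expression for $T(L_n\phi)(\sigma)_j$.

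I do not expect a real obstacle here: as the authors note, everything needed has essentially been set up in the construction of $T$, and the argument is a direct bookkeeping exercise. The only subtlety worth highlighting is the passage from the formula for $\mathcal F(L_n\phi)$ on $\R^r$ to the fiberwise formula on $\T^r$ after periodization — this is where one must be careful to write $\sigma + j$ (not merely $\sigma$) both in the character $e^{2\pi i\langle\cdot,z\rangle}$ and in the representation $\pi_{\cdot}(x)$, since these are precisely the pieces that the map $A$ reshuffles into the sequence index $j$.
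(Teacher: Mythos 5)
Your proof is correct and takes essentially the same route as the paper's: the surjectivity of $A$ via the construction $f(\sigma+j)=a(\sigma)_j$ on the dense subspace $\mathcal C$, combined with the isometry identity (\ref{per}), is exactly the paper's argument, and your fiberwise verification of (\ref{intertwining-translation-1}) is precisely the bookkeeping the paper declares ``almost already proved'' and leaves implicit. The point you flag --- that $\sigma+j$ (not $\sigma$) must appear both in the central character and in $\pi_{\sigma+j}(x)$ after periodization --- is the only place an error could creep in, and you handle it correctly.
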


\begin{proof} 
We show that $A$ is surjective. Take $a \in C(\T^r, \mathcal L)$, and define 
 $f : \R^r \rightarrow \mathcal H$ by $f(\l) =a(\sigma + j) =  a(\sigma)_j$ where $\sigma:=\l-j \in \TT^r$ and $j \in \ZZ^r$. Then $f \in \mathcal C$ and the identity (\ref{per}) shows that $f $ is an $L^2$-function on $\R^r$ with $A f = a$. By density of $\mathcal C$ and $C(\T^r,\H)$, we have that $A$ is surjective. It follows that $T$ is unitary.
 
 \end{proof}

Next we turn to the definition of shift-invariant spaces and range functions. Recall that both $\z$ and $\z^*$ are identified with $\R^r$ via the chosen basis $X_1, \dots , X_r$ for $\z$.  Denote by $\G_0$ 
the lattice of integral points in $Z$. Then $Z_{\Z^r}$ is identified by $\ZZ^r$. 

 For any discrete subset $\Gamma_1$ of $\mathcal X$,  put 
$$
\Gamma = \{ xz \in N : x \in \Gamma_1, z \in \G_0\}. 
$$\\
 {\it Remark:} The choice of basis for $\z$ is completely arbitrary, and given any lattice $\G_0$ in $Z$, there is a basis for $\z$ for which $\G_0$ is the lattice of integral points.\\

 
 \noindent
We say a  closed subspace $\S$ of $L^{2}\left(  N\right)  $ is $\Gamma$\textit{-shift
invariant} if for any $\gamma\in \Gamma$ and $\phi\in \S$ 
$$
L_\gamma\phi\in \S 
$$
 \begin{definition}\label{rangef}
A {\it range function} is  a mapping from $\T^r$ into the set of closed subspaces of the Hilbert space  ${\mathcal L}$.
 For any $\sigma\in \T^r$, we call $J(\sigma)$ {the fiber space} associated to $\sigma$. 
 We say a range function is measurable if for any $a\in L^2(\T^r,\mathcal L)$, the mapping $\sigma \mapsto \langle P_\sigma u, v\rangle, ~ \forall u, v\in \mathcal L$  is measurable, where $P_\sigma$ is the orthogonal projector of $\mathcal L$ onto $J(\sigma)$. 
 \end{definition}
 
 For a measurable range function $J$, the condition ``$a(\sigma) \in J(\sigma)$ for a.e. $\sigma$''  determines a subspace $\mathcal M_J$  of $L^2(\T^r, \mathcal L)$:
  $$
 \mathcal M_J = \{ a \in L^2(\T^r, \mathcal L) : a(\sigma) \in J(\sigma) \text{ for a.e. } \sigma\}.
 $$
It is easily seen that $\mathcal M_J$ is closed in $L^2(\T^r, \mathcal L)$. 
 Observe that for range functions $J, J'$, $\mathcal M_J = \mathcal M_{J'}$ if and only if $J(\sigma) = J'(\sigma)$ for a.e. $\sigma$. In general we identify two range functions that are equal a.e.

\begin{lemma}\label{from integral to closed subspace}
Let $J$ be a range function and put $\S=T^{-1}(\mathcal M_J)$. If 
$$
\tilde\pi_\sigma(\Gamma_1)\bigl(J(\sigma)\bigr) \subseteq J(\sigma)
$$
holds for a.e. $\sigma \in \T^r$, then $\S$ is $\Gamma$-shift invariant.
 
\end{lemma}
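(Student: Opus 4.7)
The plan is to show $\S$ is $\Gamma$-shift invariant by transferring everything to the model space $L^2(\T^r,\mathcal L)$ via the intertwining isomorphism $T$, and then verifying fiberwise that the relevant operators preserve $J(\sigma)$ a.e.

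First, I would take an arbitrary $\phi \in \S$ and set $a = T\phi$, so that by definition of $\S$ the element $a$ lies in $\mathcal M_J$, meaning $a(\sigma) \in J(\sigma)$ for a.e.\ $\sigma \in \T^r$. For arbitrary $\gamma = xz \in \Gamma$ (with $x \in \Gamma_1 \subseteq \mathcal X$ and $z \in \G_0 \subseteq Z$), the intertwining identity \eqref{intertwining-translation-1} from the previous lemma gives
$$
T(L_\gamma \phi)(\sigma) = \tilde\pi_\sigma(\gamma)\, a(\sigma) \qquad \text{for a.e. } \sigma.
$$
So it suffices to prove $\tilde\pi_\sigma(\gamma) a(\sigma) \in J(\sigma)$ a.e.

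The key step is to decompose $\tilde\pi_\sigma(xz)$ and isolate the effect of the central part. Since $xz \in N$ with $z \in Z$, we have $\pi_{\sigma+j}(xz) = \pi_{\sigma+j}(x)\pi_{\sigma+j}(z) = e^{2\pi i \langle \sigma+j, z\rangle}\pi_{\sigma+j}(x)$. Now $z \in \G_0$ corresponds to an integral point in $\R^r$, so $\langle j, z\rangle \in \Z$ for every $j \in \Z^r$, hence $e^{2\pi i \langle \sigma+j,z\rangle} = e^{2\pi i \langle \sigma,z\rangle}$ is a single scalar independent of $j$. Reading this fiber by fiber in $\mathcal L$ gives
$$
\tilde\pi_\sigma(xz) = e^{2\pi i \langle \sigma, z\rangle}\, \tilde\pi_\sigma(x).
$$

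Now $\tilde\pi_\sigma(x)\bigl(J(\sigma)\bigr) \subseteq J(\sigma)$ for a.e.\ $\sigma$ by the hypothesis on $J$, and multiplication by the scalar $e^{2\pi i \langle \sigma, z\rangle}$ trivially preserves the (closed) subspace $J(\sigma)$. Therefore $\tilde\pi_\sigma(xz) a(\sigma) \in J(\sigma)$ a.e., so $T(L_\gamma \phi) \in \mathcal M_J$ and hence $L_\gamma \phi \in \S$, as required.

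The only subtle point is the decomposition step: one must use that $\G_0$ is \emph{exactly} the integer lattice in $Z$ (with respect to the chosen basis of $\z$) so that the central character $e^{2\pi i \langle \sigma+j, z\rangle}$ is genuinely $j$-independent; without this, $\tilde\pi_\sigma(z)$ would not simplify to a scalar on all of $\mathcal L$ and hypothesis on $\Gamma_1$ alone would not suffice. Everything else is bookkeeping with the unitarity of $T$ and the definition of $\mathcal M_J$.
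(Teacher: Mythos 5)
Your proof is correct and follows essentially the same route as the paper's: apply the intertwining identity, factor $\tilde\pi_\sigma(xz)$ into the scalar $e^{2\pi i\langle\sigma,z\rangle}$ times $\tilde\pi_\sigma(x)$, and invoke the invariance hypothesis on $J(\sigma)$. Your explicit remark that the scalar factorization requires $z\in\G_0$ to be an integral point (so that $e^{2\pi i\langle\sigma+j,z\rangle}$ is independent of $j$) is a welcome bit of extra care that the paper passes over.
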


\begin{proof}  For each $\phi\in \S$ and $k\in \Gamma_1$, we apply  (\ref{intertwining-translation-1})  to see that for a.e. $\sigma \in \T^r$, $k \in \Gamma_1, m \in \G_0$,
$$
T\bigl( L_{km} \phi\bigr)(\sigma) = e^{2\pi i\langle \sigma, m\rangle} \tilde\pi_\sigma (k)\bigl(T\phi(\sigma)\bigr)
$$
and so by definition of $\mathcal M_J$, $T\bigl( L(\gamma) \phi\bigr) \in \mathcal M_J$ and $L_\gamma\phi \in \S$ for all $\gamma \in \Gamma$.

  \end{proof}

 The characterization of $\Gamma$-shift-invariant subspaces is given in 
  
 \begin{theorem}\label{subspace-to-integral} 
 
  Let $\S\subseteq L^2(N)$ be a closed subspace. Then the following are equivalent. 
  \begin{itemize} 
  \item[(i)] $\S$ is  $\Gamma$-shift invariant.
  \item[(ii)] There is a unique range function $J$ up to equivalency  such that $J(\sigma) $ is $\tilde\pi_\sigma(\Gamma_1)$-invariant for a.e. $ \sigma$, and $T(\S) = \mathcal M_J$.
 
  \end{itemize}
 \end{theorem}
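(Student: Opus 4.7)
The direction (ii)$\Rightarrow$(i) is exactly Lemma \ref{from integral to closed subspace}, together with the uniqueness remark following Definition \ref{rangef}. So the plan focuses on (i)$\Rightarrow$(ii). My approach follows the classical Helson/Bownik scheme adapted to the operator-valued Plancherel side, using the vector-valued range-function theorem as developed in \cite{H,CP,KR}.

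First, I would transfer everything to the Fourier side. Set $V=T(\S)$; since $T$ is a unitary isomorphism, $V$ is a closed subspace of $L^2(\T^r,\mathcal L)$. Applying (\ref{intertwining-translation-1}) to the shift invariance of $\S$ and using that for $m\in\G_0=\Z^r$ we have $(\tilde\pi(m)a)(\sigma)=e^{2\pi i\langle\sigma,m\rangle}a(\sigma)$, we see that $V$ is invariant under multiplication by every trigonometric exponential $e^{2\pi i\langle\cdot,m\rangle}$, $m\in\Z^r$. A standard approximation argument (Stone--Weierstrass plus dominated convergence) then promotes this to invariance of $V$ under pointwise multiplication by every $f\in L^\infty(\T^r)$; in other words, $V$ is a \emph{doubly-invariant} submodule of the $\mathcal L$-valued $L^2$-space over $\T^r$.

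Second, I would invoke the vector-valued Helson theorem, essentially in the form given in Theorem~4.5 of \cite{CP} (or the Bownik--Kamyabi-Gol version in \cite{KR}): for any separable Hilbert space $\mathcal L$, every closed $L^\infty(\T^r)$-invariant subspace $V$ of $L^2(\T^r,\mathcal L)$ equals $\mathcal M_J$ for a measurable range function $J:\T^r\to\{\text{closed subspaces of }\mathcal L\}$, unique up to equality almost everywhere. This yields a measurable $J$ with $T(\S)=V=\mathcal M_J$. Uniqueness is automatic by the remark after Definition \ref{rangef}.

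Finally, I would verify the fiberwise $\tilde\pi_\sigma(\Gamma_1)$-invariance. Fix $k\in\Gamma_1$. The invariance $L_k\S\subseteq\S$ translates via (\ref{intertwining-translation-1}) to $\tilde\pi(k)V\subseteq V$, i.e.\ for every $a\in\mathcal M_J$,
\[
\tilde\pi_\sigma(k)\,a(\sigma)\in J(\sigma)\quad\text{for a.e. }\sigma\in\T^r.
\]
To conclude $\tilde\pi_\sigma(k)\bigl(J(\sigma)\bigr)\subseteq J(\sigma)$ a.e., I would select a countable family $\{a_n\}\subset\mathcal M_J$ such that $\{a_n(\sigma)\}$ is dense in $J(\sigma)$ for a.e. $\sigma$ (such a measurable ``Castaing representation'' is standard for measurable range functions into separable Hilbert spaces, and is how $\mathcal M_J$ is typically constructed). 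Intersecting the countably many null sets where $\tilde\pi_\sigma(k)a_n(\sigma)\notin J(\sigma)$ for some $n$ gives a full-measure set on which $\tilde\pi_\sigma(k)$ sends a dense subset of $J(\sigma)$ into the closed subspace $J(\sigma)$; since $\tilde\pi_\sigma(k)$ is continuous (indeed unitary), this extends to all of $J(\sigma)$. Since $\Gamma_1$ is countable, a final intersection over $k\in\Gamma_1$ produces a single full-measure set on which the required invariance holds simultaneously.

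The main obstacle is Step~2: justifying the application of the Helson-type theorem to the $\mathcal L$-valued setting. The Hilbert space $\mathcal L=\ell^2(\Z^r,\mathcal H)$ is separable, so the cited references apply verbatim, but one must confirm the measurability hypotheses (the space is standard Borel, $L^\infty(\T^r)$-action is jointly measurable). Step~3 is the next most subtle point, because passing from ``a.e.\ for each $a$'' to ``a.e., for all vectors in $J(\sigma)$'' genuinely requires the Castaing selection argument; everything else is routine bookkeeping with the three isometries composing $T$.
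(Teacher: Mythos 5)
Your proposal is correct and follows essentially the same route as the paper: both reduce (i)$\Rightarrow$(ii) to Helson's theorem on doubly-invariant subspaces to obtain $J$ with $T(\S)=\mathcal M_J$, and both establish the fiberwise $\tilde\pi_\sigma(\Gamma_1)$-invariance by selecting a countable family whose fibers span $J(\sigma)$ (the paper uses the projections $F_{n,p}$ of an orthonormal basis of $L^2(\T^r,\mathcal L)$, which is a concrete instance of your Castaing-type selection) and intersecting countably many conull sets. The direction (ii)$\Rightarrow$(i) is handled identically via Lemma \ref{from integral to closed subspace}.
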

 \begin{proof} 
  To show  (i) $ \implies $ (ii), we apply \cite[ Theorem 8]{H}. Suppose that $\S$ is shift invariant and let $\phi \in \S$, $a = T\phi$. For each $m \in \G_0$ and $\sigma \in \Sigma$, the relation (\ref{intertwining-translation-1}) shows that 
  $$
  e^{2\pi i m\cdot \sigma} a(\sigma) = T\bigl( L_m\phi\bigr)(\sigma)
  $$
  so that $T(\S)$ is doubly invariant in the sense of \cite{H}. By \cite[Theorem 8]{H},  there exists a range function $J$ such that $T(\S) = \mathcal M_J$. It remains to show that $J(\sigma)$ is $\tilde\pi_\sigma(\Gamma_1)$-invariant for a.e. $\sigma$. 
  
  Choose an orthonormal basis  $\{E^{(n)}\}_{n \in \N}$ for $\mathcal L$. For each $n \in \N$ and $p \in \Z$ put  $G_{p,n}(\sigma)= e^{2\pi i\langle\sigma ,p\rangle}E^{(n)}$. Then $\{ G_{p,n} : n \in \N, p \in \Z\}$ is an orthonormal basis for  $L^2(\TT^r, \mathcal L)$.  Let $P:=P_{\mathcal T}$ denote the orthogonal projector of $L^2(\TT^r, \mathcal L)$ onto $T(\S)$, and for each $n,p$, choose a function $F_{n,p} $ that belongs to the equivalence class of $P(G_{n,p})$. (From the proof of \cite[Theorem 8]{H} we see that $J(\sigma):=\overline{\text{span}}\{F_{n,p}(\sigma):~ n\in\N, p \in \Z\}$ for all $\sigma$.) 
 Now by shift-invariance of $\S$, definition of $\mathcal M_J = T(\S)$ and  (\ref{intertwining-translation-1}), for each $n\in \N, p\in \Z$, and $k \in \gamma'$, we have a conull subset $E_{n,p, k}$ of $\T^r$ such that for all $\sigma \in E_{n,p, k}$, the sequence
  $$
  \tilde\pi_\sigma(k) F_{n,p}(\sigma) = TL_k T^{-1}F_{n,p}(\sigma)
  $$
belongs to $J(\sigma)$. Put $E = \cap\{E_{n,p,k} : n\in \N, p\in \Z, k\in \Gamma'\}$. Then $E$ is conull, and for $\sigma \in E$, we have $ \tilde\pi_\sigma(k) F_{n,p}(\sigma)$ belongs to $J(\sigma)$ for all $n, p, $ and $k$. Since $J(\sigma)$ is spanned by $F_{n,p}(\sigma)$,then $J(\sigma)$ is $\tilde\pi_\sigma(\Gamma')$-invariant.  The proof of $(ii)\implies (i)$ is obtained by  Lemma  \ref{from integral to closed subspace}.

    \end{proof}

 \section{Frames and Bases}\label{Frames and Bases}   
 
 \subsection{Frames}
 Let $X =\{ \eta_\a\}$ be a countable family of vectors in a Hilbert space $\H$. Recall that $X$ is a Bessel family if there is a positive constant $B $ such that
 $$
 \sum_{\a} |\langle h, \eta_\a\rangle |^2 \le B\ \| h\|^2
 $$
 holds for all $h \in $ span$X$. If in addition there is $0 < A \le B<\infty$ so that 
 $$
 A\ \|h\|^2 \le  \sum_{\a }|\langle h, \eta_\a\rangle |^2 \le B\ \| h\|^2
 $$
 holds for all $h \in $ span$X$, then we say that $X$ is a frame (for its span). Finally, $X$ is  a Riesz family with  positive finite constants $A$ and $B$ if 
 $$
 A \sum_{\a} |a_\a|^2 \le \left\|\sum_{\a}a_\a \eta_\a \right\|^2 \le B \sum_{\a} |a_\a|^2
 $$
 holds for all finitely supported indexed sets $(a_\a)_{\a}$ of complex numbers. If a Riesz family is complete in $\H$ we say that it is a Riesz basis. If   $A = B = 1$ for a Riesz basis $X$, then $X$ is an orthogonal family and $\|\eta_\a\|=1$ for all $\a$. 
 
 Fix a discrete subset $\Gamma $ of $N$ of the form $\Gamma_1 \G_0$. 
  Let $\mathcal A\subset L^2(N)$ be a countable set.  Define
  $$
  E(\mathcal A)=\{L_\gamma \phi: ~ \gamma\in \Gamma, \phi\in \mathcal A\}
  $$ 
  and put
$\S =\overline{\text{span}}\ E(\mathcal A)$. Let  $J$ be the range function  associated to $\S$. With these definitions we have

\begin{theorem}\label{characterization of frames} The system 
$E(\mathcal A)$ is a frame with constants $0<A\leq B<\infty$ (or a Bessel family with constant $B$) if and only if for almost $\sigma\in {\T^r}$ the system  $T(E(\mathcal A))(\sigma):=\{T(L_{k}\phi)(\sigma):~~ \phi\in \mathcal A, \ k\in \Gamma'\}$ constitutes a frame (Bessel) family for $J(\sigma)$ with the unified constants. 
\end{theorem}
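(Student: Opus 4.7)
The plan is to reduce the global frame question on $\S$ to a pointwise-in-$\sigma$ frame question on the fibers $J(\sigma)$, using the isometry $T$ together with Parseval on the central torus. The crucial computation is the following: for $\gamma = km$ with $k \in \Gamma_1$ and $m \in \G_0 \simeq \Z^r$, any $\psi \in \S$ with $b := T\psi \in \mathcal M_J$, and any $\phi \in \mathcal A$, the intertwining identity (\ref{intertwining-translation-1}) together with $\tilde\pi_\sigma(m) = e^{2\pi i \langle \sigma, m\rangle} I$ gives
$$
\langle \psi,\, L_{km}\phi\rangle_{L^2(N)}\ =\ \int_{\T^r} e^{-2\pi i \langle \sigma, m\rangle}\, \langle b(\sigma),\, T(L_k\phi)(\sigma)\rangle_{\mathcal L}\, d\sigma,
$$
so this inner product is exactly the $m$-th Fourier coefficient on $\T^r$ of the scalar function $\sigma \mapsto \langle b(\sigma),\, T(L_k\phi)(\sigma)\rangle_{\mathcal L}$. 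Parseval on $L^2(\T^r)$, summing over $k \in \Gamma_1$ and $\phi \in \mathcal A$, and interchanging sum and integral by positivity, then yield the master identity
$$
\sum_{\gamma \in \Gamma,\, \phi \in \mathcal A} |\langle \psi,\, L_\gamma \phi\rangle|^2\ =\ \int_{\T^r} \sum_{k \in \Gamma_1,\, \phi \in \mathcal A} |\langle b(\sigma),\, T(L_k\phi)(\sigma)\rangle_{\mathcal L}|^2\, d\sigma.
$$

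The ``if'' direction is then immediate: assuming the fiber frame bounds hold for a.e.\ $\sigma$ and every element of $J(\sigma)$, apply the pointwise inequality to $b(\sigma) \in J(\sigma)$, integrate over $\T^r$, substitute into the master identity on the middle term, and use $\|b\|_{L^2(\T^r,\mathcal L)} = \|\psi\|_{L^2(N)}$ on the outer terms. The Bessel assertion follows by retaining only the upper bound.

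For the converse, I would employ a standard localization argument. Given the global frame inequality for every $\psi \in \S$, test with functions of the form $b(\sigma) = \chi_E(\sigma) u(\sigma)$, where $E\subseteq \T^r$ is an arbitrary measurable set and $u$ is a measurable section of $J$ (so $b \in \mathcal M_J$); the freedom in $E$ forces the pointwise bound
$$
A\,\|u(\sigma)\|_{\mathcal L}^2\ \le\ \sum_{k,\phi}|\langle u(\sigma),\, T(L_k\phi)(\sigma)\rangle|^2\ \le\ B\,\|u(\sigma)\|_{\mathcal L}^2
$$
on a conull set $\T^r \setminus N_u$ depending on $u$.

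The main obstacle is passing from a single section $u$ to every vector in $J(\sigma)$ simultaneously, off a single null set. Here I rely on measurability of $J$ in the sense of Definition \ref{rangef}, together with separability of $\mathcal L = \ell^2(\Z^r, \mathcal{HS}(L^2(\R^d)))$, to extract a countable family $\{u_n\}$ of measurable sections such that $\{u_n(\sigma)\}_n$ is dense in $J(\sigma)$ for a.e.\ $\sigma$---this is the measurable-selection step, available by the same general argument as in \cite{CP} for the LCA setting. Applying the previous bound to each $u_n$ and intersecting the countable collection of null sets $N_{u_n}$ produces a single conull set on which the inequalities hold simultaneously at every $u_n(\sigma)$; density together with continuity of $v \mapsto \sum_{k,\phi}|\langle v,\, T(L_k\phi)(\sigma)\rangle|^2$ in $v$ (using the Bessel upper bound already obtained) then extends the frame inequality to all of $J(\sigma)$ for a.e.\ $\sigma$. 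This measurable-selection step is the technical heart of the proof and is where the measurability clause in the definition of a range function is essential.
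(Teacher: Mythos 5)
Your proposal is correct and follows essentially the same route as the paper: the same Fourier-coefficient/Parseval computation produces the master identity (\ref{sumid}), and the sufficiency direction is identical. For necessity, the paper's argument is your argument in concrete form --- it takes the dense family of measurable sections to be $\sigma\mapsto P_\sigma h$ for $h$ in a countable dense subset of $\mathcal L$ (so the measurability clause of Definition \ref{rangef} supplies the selection directly, with no appeal to a general selection theorem) and carries out your ``freedom in $E$'' localization as a contradiction on a positive-measure set $C$ pulled back through $T^{-1}$.
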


\begin{proof}
Let $f\in L^2(N)$. Since $\|Tf\|=\|f\|$  we have  for each $\gamma \in \Gamma$,
$$
\begin{aligned}
\sum_{\phi\in \mathcal A, \gamma\in \Gamma} |\langle f, L_\gamma \phi\rangle |^2 
&= \sum_{\phi\in \mathcal A, \gamma\in \Gamma} \left|\langle Tf, T(L_\gamma \phi)\rangle \right|^2\\
&= \sum_{\phi\in \mathcal A, \gamma\in \Gamma} \left|\int_{\TT^r} \langle Tf(\sigma), T(L_\gamma \phi(\sigma)\rangle \ d\sigma \right|^2\\
&= \sum_{\phi\in \mathcal A, \gamma\in \Gamma} \left|\int_{\TT^r}\langle Tf(\sigma), \tilde\pi_\sigma(\gamma) T\phi(\sigma)\rangle \ d\sigma \right|^2.
\end{aligned}
$$
Writing $\gamma=km$, with $k \in \Gamma_1, m \in \G_0$, we get
$$
\begin{aligned}
  \sum_{\phi\in \mathcal A, \gamma\in \Gamma} \left|\int_{\T^r}\langle Tf(\sigma), \tilde\pi_\sigma(\gamma) T\phi(\sigma)\rangle \ d\sigma\right|^2&= 
    \sum_{\phi\in \mathcal A, (k,m)\in \Gamma} \left|\int_{\T^r} \langle T f(\sigma), 
    e^{2\pi i  \langle \sigma,m\rangle}\tilde\pi_\sigma(k) T\phi(\sigma)\rangle \ d\sigma\right|^2\\
    &= \sum_{\phi\in \mathcal A, (k,m)\in \Gamma} \left|\int_{\T^r} \langle Tf(\sigma),  \tilde\pi_\sigma(k) T(\phi)(\sigma)\rangle e^{-2\pi i  \langle \sigma,m\rangle}\ d\sigma\right|^2
\end{aligned}
$$
For each $k$  put  $G_k(\sigma):= \langle Tf(\sigma),  \tilde\pi_\sigma(k) T\phi(\sigma)\rangle$. Then $G_k$ is integrable with square-summable Fourier coefficients, therefore $G_k$ lies in $L^2({\T^r})$. By Fourier inversion we then continue the above as follows:
$$
\begin{aligned}
  \sum_{\phi\in \mathcal A, (k,m)\in \Gamma} \left|\int_{\T^r}\langle Tf(\sigma),  \tilde\pi_\sigma(k) T(\phi)(\sigma)\rangle e^{-2\pi i \langle \sigma,m\rangle}\ d\sigma\right|^2 
  & =  \sum_{\phi\in \mathcal A, (k,m)\in \Gamma} | \hat G_k(m)|^2\\
    &=   \sum_{\phi\in \mathcal A, k\in \Gamma'} \|G_{k}\|^2\\
    &=   \sum_{\phi\in \mathcal A, k\in \Gamma'}  \int_{\T^r}|G_k(\sigma)|^2 d\sigma
\end{aligned}  
$$
By substituting back $G_k(\sigma):= \langle Tf(\sigma),  \tilde\pi_\sigma(k) T(\phi)(\sigma)\rangle$ in above we obtain 
\begin{equation}
\begin{aligned} \label{sumid}
\sum_{\phi\in \mathcal A, \gamma\in \Gamma} |\langle f, L_\gamma \phi\rangle |^2 
&=  \sum_{\phi\in \mathcal A, k\in \Gamma'}  \int_{\T^r} |G_{k}(\sigma)|^2 d\sigma \\
&=  \sum_{\phi\in \mathcal A, k\in \Gamma'}  \int_{\T^r}  |\langle Tf(\sigma),  \tilde\pi_\sigma(k) T(\phi)(\sigma)\rangle|^2 d\sigma \\
     &=     \int_{\T^r}  \sum_{\phi\in \mathcal A, k\in \Gamma'}   |\langle Tf(\sigma),   T(L_k\phi)(\sigma)\rangle|^2 d\sigma.
\end{aligned}  
\end{equation}
Now suppose that $f \in \S$ and that for some $0 < A \le B<\infty$, the system   $T(E(\mathcal A))(\sigma)$ is an $(A,B)$-frame for a.e. $\sigma \in \T$. Then $Tf(\sigma) \in J(\sigma)$ holds for a.e. $\sigma$, so
$$
A\|(Tf)(\sigma)\|^2\leq   \int_{\T^r}  \sum_{\phi\in \mathcal A, k\in \Gamma'}   |\langle Tf(\sigma),   T(L_k\phi)(\sigma)\rangle|^2 d\sigma  \leq B\|(Tf)(\sigma)\|^2~.
$$
holds for a.e. $\sigma$. Integrating yields
$$
\begin{aligned}
A \|f\|^2 = A \| Tf\|^2 &= A \int_{\T^r} \|(Tf)(\sigma)\|^2 d\sigma 
\le \int_{\T}  \sum_{\phi\in \mathcal A, k\in \Gamma'}   |\langle Tf(\sigma),   T(L_k\phi)(\sigma)\rangle|^2 d\sigma  \\
&\le B \int_\T \|(Tf)(\sigma)\|^2d\sigma\\
&= B \|f\|^2. 
\end{aligned}
$$
By substituting (\ref{sumid}) we obtain 
$$
A \|f\|^2 \le\sum_{\phi\in \mathcal A, \gamma\in \Gamma} |\langle f, L_\gamma \phi\rangle |^2  \le B \|f\|^2. 
$$

  Now assume that $E(\mathcal A)$ is a frame family with constants $A$ and $B$. Let $\mathcal D$ be a countable dense subset of $\mathcal L$. To prove that the family  
  $T(E(\mathcal A))(\sigma)$ constitutes  a frame for $J(\sigma)$   for almost every $\sigma$, it is sufficient to show that for each $h\in \mathcal D$,
  \begin{equation}\label{inequality-for-dense-set}
A\|P_\sigma h \|^2\leq \sum_{\phi\in \mathcal A,\ k\in\Gamma} |\langle T(L_{k}\phi)(\sigma), h\rangle|^2 \leq B\|P_\sigma h \|^2  
  \end{equation}
  holds for a.e. $\sigma \in \T$, 
 where $P_\sigma$ is the orthogonal projection operator from $\mathcal L$ onto $J(\sigma)$. 
 (For, if  (\ref{inequality-for-dense-set}) holds for any $h\in \mathcal D$, then we have   $N_h$ a measure zero subset in ${\T}$ such that  for any $\sigma\in N_h^c$ the relation 
  (\ref{inequality-for-dense-set}) holds. Put $N= \cup_{h\in \mathcal D} N_h$. Then $N$ has measure zero and (\ref{inequality-for-dense-set}) holds for all $h\in \mathcal D$ and all $\sigma \in {\bf T}/N$. Since $P_\sigma(\mathcal D)$ is dense in $J(\sigma)$ for all $\sigma\in N^c$,  the assertion holds, i.e.,  $T(E(\mathcal A))(\sigma)$ constitutes  a frame for $J(\sigma)$ with the identical constants for all $\sigma\in N^c$.)
 To complete the proof, we still need to show  (\ref{inequality-for-dense-set}). For this, we assume that it fails for  some $h_0\in\mathcal D$ and define $G(\sigma):= \sum_{\phi\in \mathcal A,\ k\in\Gamma} |\langle T(L_{k}\phi)(\sigma), h_0 \rangle|^2$.  
 Then one of the following sets must have  positive measure. 
 $$
 \{\sigma\in {\TT^r}:~ G(\sigma)>B\|P_\sigma h_0\|^2\}, ~~ ~~~~\{\sigma\in {\T^r}:~ G(\sigma)<A\|P_\sigma h_0\|^2\}.
 $$
 Without lose of generality, we assume that the measure of the first set is positive. Therefore for some $\epsilon>0$, the measure of 
$ C:=\{\sigma\in {\T^r}:~ G(\sigma)>(B+\epsilon)\|P_\sigma h_0\|^2\}$ is positive too. 
Put $\tilde h_0(\sigma)= \chi_{C}(\sigma) P_\sigma h_0$. Then $\sigma \mapsto \tilde h_0(\sigma)$ is measurable and $\tilde h_0(\sigma)\in J(\sigma)$ and $\tilde h_0\in T(\S)=\int_{\bf T} J(\sigma)d\sigma$. Therefore  for some $f_0\in \S$,  $T(f_0)=\tilde h_0$. We show that the upper frame inequality does not hold for $f_0$ which leads us to a contradiction:
Let $\gamma=km$, with $k \in \Gamma_1, m \in \G_0$. Then
\begin{align}\label{first-line}
\sum_{\phi\in \mathcal A, \ \gamma\in \Gamma}
|\langle f_0, L_\gamma \phi\rangle|^2&= \sum_{\phi\in \mathcal A, \ \gamma\in \Gamma}
|\langle T(f_0), T(L_\gamma \phi)\rangle|^2\\\notag
&=\sum_{\phi\in \mathcal A, \ \gamma=(m,k)\in \Gamma}
|\int_{\TT^r}\langle T(f_0)(\sigma), e^{2\pi i  \langle \sigma,m\rangle}\tilde\pi_\sigma(k)T\phi\rangle d\sigma|^2\\\notag
&= \sum_{\phi\in \mathcal A, \ \gamma=(k,m)\in \Gamma}
|\hat G_{k}(m)|^2
\end{align}
where $G_{k}(\sigma)=  \langle T(f_0)(\sigma),  \tilde\pi_\sigma(k)T(\phi)\rangle$. For any $k$  we have 
$\sum_m |\hat G_{k}(m)|^2 = \|G_{k}\|^2$. By using  this  equality in our previous calculations and 
  using the Plancherel theorem and substituting back the function $G_k$, all together  we arrive the following:
\begin{align}\notag
  \sum_{\phi\in \mathcal A, \ k\in \Gamma'}\sum_m
|\hat G_{k}(m)|^2&= \sum_{\phi\in \mathcal A, \  k\in \Gamma'}\|G_{k}\|^2\\\notag
&=  \sum_{\phi\in \mathcal A, \  k\in \Gamma'}\int_{\T^r} | \langle T(f_0)(\sigma),  \tilde\pi_\sigma(k)\ T\phi(\sigma)\rangle|^2 d\sigma\\\notag
&=  \sum_{\phi\in \mathcal A, \  k\in \Gamma'}\int_{\T^r} | \langle  \tilde  h_0(\sigma),  \tilde\pi_\sigma(k)\ T\phi(\sigma)\rangle|^2 d\sigma\\\label{for substitution}
&= \int_C \sum_{\phi\in \mathcal A, \   k\in \Gamma'}  | \langle P_\sigma  h_0,  \tilde\pi_\sigma(k)\ T\phi(\sigma)\rangle|^2 d\sigma
\end{align}
By substituting $G(\sigma)= \sum_{\phi\in \mathcal A, \   k\in \Gamma'}  | \langle P_\sigma  h_0,  \tilde\pi_\sigma(k)\ T\phi(\sigma)\rangle|^2 $ in  the above, we have
\begin{align}\notag
(\ref{for substitution}) & > (B+\epsilon) \int_C \|P_\sigma h_0\|^2 d\sigma\\\notag
&= (B+\epsilon) \int_{\T^r} \|\chi_C(\sigma)P_\sigma h_0\|^2 d\sigma\\\notag
&= (B+\epsilon) \int_{\TT^r} \| \tilde h_0(\sigma)\|^2 d\sigma\\\notag
&= (B+\epsilon)  \| \tilde h_0 \|^2\\\notag
&= (B+\epsilon)  \|T^{-1} \tilde h_0 \|^2\\\label{last-line}
&= (B+\epsilon)  \|f_0 \|^2. 
\end{align}
Now a combination of (\ref{first-line}) and (\ref{last-line})
contradicts the assumption and hence we are done. 
 \end{proof}

\subsection{Riesz Bases}

Before we start with the characterization of Riesz bases obtained from $\Gamma$ shifts of a countable sets in terms of range functions, we shall introduce the following notation:  for any  finite supported sequence  $a = \{a_m\}_m\in l^2(\ZZ^{r})$, denote by $P_a$ the associated 
  trigonometric polynomial $P_a(\sigma)= \sum_m a_m e^{2\pi i \langle\sigma,m\rangle}$ for all $\sigma\in \TT^r$. Observe that 
\begin{equation}\label{isometry}
  \|P_a\|^2_2 = \sum_m |a_m|^2.
 \end{equation}

  We have the following. 
  
\begin{lemma}\label{equality}
Let $\mathcal A$ be a countable  set   in $L^2(N)$. Let $a = \{a_{\phi,k,m}\}_{\phi \in \mathcal A, km \in \Gamma}$ be a finitely supported sequence in $l^2(\mathcal A \times \Gamma)$. For each $\phi\in \mathcal A$ and $k \in \Gamma_1$ put $P_{\phi,k}(\sigma) =  \sum_{m} a_{\phi, k,m}  e^{2\pi i \langle\sigma,m\rangle}$.
 
 Then 
 $$
 \left\|\sum_{\phi\in \mathcal A, km\in \Gamma} a_{\phi, k,m} L_{km}\phi\right\|^2= \int_{\T^r} \left\|\sum_{\phi,k}P_{\phi,k}(\sigma)~\tilde \pi_\sigma(k)T(\phi)(\sigma)\right\|_\mathcal L^2 ~d\sigma
$$
 \end{lemma}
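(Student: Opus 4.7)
The plan is to transport the left hand side through the isometry $T$ into $L^2(\T^r, \mathcal L)$, then use the intertwining identity (\ref{intertwining-translation-1}) together with the fact that elements of $\G_0 \subseteq Z$ act on $\tilde\pi$ as scalar multiplication by a character, to recognize the trigonometric polynomial $P_{\phi,k}(\sigma)$ as a factor.

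First, since $T$ is an isometric isomorphism and the sum is finite, we have
$$
\Bigl\|\sum_{\phi,k,m} a_{\phi,k,m} L_{km}\phi\Bigr\|^2 = \Bigl\|\sum_{\phi,k,m} a_{\phi,k,m} T(L_{km}\phi)\Bigr\|_{L^2(\T^r,\mathcal L)}^2.
$$
Next, I would invoke the intertwining relation (\ref{intertwining-translation-1}), which says that $T \circ L_n = \tilde\pi(n) \circ T$, and observe that $\tilde\pi$ is a representation (since $L$ is and $T$ is a unitary conjugation), so $\tilde\pi_\sigma(km) = \tilde\pi_\sigma(k) \tilde\pi_\sigma(m)$. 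Because $m \in \G_0 \subseteq Z$, the remark following the definition of $\tilde\pi$ gives $\tilde\pi_\sigma(m) = e^{2\pi i \langle\sigma,m\rangle}\,I$. Therefore, pointwise in $\sigma$,
$$
T(L_{km}\phi)(\sigma) = e^{2\pi i \langle\sigma,m\rangle}\,\tilde\pi_\sigma(k) T\phi(\sigma).
$$

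Using finite support to collect the sum over $m$ inside the scalar factor in front of $\tilde\pi_\sigma(k) T\phi(\sigma)$, for each $\sigma$
$$
\sum_{\phi,k,m} a_{\phi,k,m} T(L_{km}\phi)(\sigma) = \sum_{\phi,k} \Bigl(\sum_m a_{\phi,k,m} e^{2\pi i \langle\sigma,m\rangle}\Bigr)\tilde\pi_\sigma(k) T\phi(\sigma) = \sum_{\phi,k} P_{\phi,k}(\sigma)\,\tilde\pi_\sigma(k) T\phi(\sigma).
$$
Taking $\|\cdot\|_{\mathcal L}^2$ and integrating over $\T^r$ produces exactly the right hand side of the claimed identity.

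The only real subtlety I foresee is keeping track of the representation property of $\tilde\pi$ and the fact that on the central lattice $\G_0$ it collapses to multiplication by the character $e^{2\pi i\langle\sigma,\cdot\rangle}$; both are direct consequences of definitions given earlier, so no further estimates or limiting arguments are needed since the sum defining $a$ is finite.
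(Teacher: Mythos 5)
Your proposal is correct and follows essentially the same route as the paper: transport through the isometry $T$, use the pointwise identity $T(L_{km}\phi)(\sigma)=e^{2\pi i\langle\sigma,m\rangle}\tilde\pi_\sigma(k)T\phi(\sigma)$ (which the paper applies implicitly in its chain of equalities), collect the sum over $m$ into $P_{\phi,k}(\sigma)$, and integrate. Your explicit justification of that identity via the representation property of $\tilde\pi$ and its scalar action on the central lattice is a welcome bit of extra care but not a different argument.
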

 \begin{proof}

 The proof is  based on some elementary calculations as follows: Since $T$ is isometric, we have 

$$
\begin{aligned}
\left\|\sum_{\phi\in \mathcal A, k,m} a_{\phi, k,m} L_{km}\phi\right\|_{L^2(N)}^2
 &= 
 \left\|\sum_{\phi\in \mathcal A, k,m} a_{\phi, k,m} T(L_{km}\phi)\right\|_{L^2({\T^r},\mathcal L)}^2\\
 &= \int_{\T^r}  \left\|\sum_{\phi\in \mathcal A,k,m} a_{\phi, k,m} T(L_{km}\phi)(\sigma)\right\|_{\mathcal L}^2 ~d\sigma\\
 &= \int_{\T^r} \left\|\sum_{\phi\in \mathcal A, k,m } a_{\phi, k,m} T(L_{km}\phi)(\sigma)\right\|_{\mathcal L}^2 ~d\sigma\\
 &= \int_{\T^r}  \left\|\sum_{\phi\in \mathcal A, k} \left(\sum_m a_{\phi, k,m} e^{2\pi i\langle \sigma,m\rangle}\right) \tilde\pi_\sigma(k)T(\phi)(\sigma)\right\|_{\mathcal L}^2 ~d\sigma\\
 &= \int_{\T^r} \left\|\sum_{\phi\in \mathcal A, k} P_{\phi,k}(\sigma)\tilde\pi_\sigma(k)T\phi (\sigma)\right\|_{\mathcal L}^2 ~d\sigma.
 \end{aligned}
$$
\end{proof}

\begin{proposition}\label{from babies to master}  
Let $\mathcal A$ be a countable subset of $L^2(N)$ and $E(\mathcal A)$ be  the set of $\Gamma$-translates of elements in $\mathcal A$ with associated range function $J$. Assume that for some $0 < A \le B<\infty$, $\{\tilde \pi_{\sigma}(k) T(\phi)(\sigma):~~ k\in \Gamma_1, ~ \phi\in \mathcal A\}$ is a Riesz basis  for $J(\sigma)$ with constants $A$ and $B$,  for almost every $\sigma\in {\T}$. Then   $E(\mathcal A)$  is a Riesz basis for its span with the same constants. 
\end{proposition}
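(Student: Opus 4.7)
The plan is to combine Lemma \ref{equality} with the fiberwise Riesz hypothesis and Parseval's identity on $\T^r$ to transport the Riesz inequalities from each fiber $J(\sigma)$ up to $L^2(N)$.

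First I would fix a finitely supported scalar sequence $a = \{a_{\phi,k,m}\}$ indexed over $\mathcal A \times \Gamma_1 \times \G_0$, and form the trigonometric polynomials $P_{\phi,k}(\sigma) = \sum_m a_{\phi,k,m} e^{2\pi i \langle \sigma, m\rangle}$ on $\T^r$, only finitely many of which are nonzero. Lemma \ref{equality} rewrites the quantity to be estimated as an integral of a fiber norm:
\begin{equation*}
\left\|\sum_{\phi,k,m} a_{\phi,k,m}L_{km}\phi\right\|^2 = \int_{\T^r}\left\|\sum_{\phi,k}P_{\phi,k}(\sigma)\,\tilde\pi_\sigma(k)T(\phi)(\sigma)\right\|_{\mathcal L}^2 d\sigma.
\end{equation*}

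Next I would invoke the hypothesis pointwise: at a.e.\ $\sigma$, the system $\{\tilde\pi_\sigma(k)T(\phi)(\sigma)\}_{\phi,k}$ is an $(A,B)$-Riesz family in $J(\sigma)$, and since the Riesz bounds hold uniformly over every square-summable coefficient sequence, I can instantiate them at each good $\sigma$ with the coefficients $c_{\phi,k} = P_{\phi,k}(\sigma)$, obtaining
\begin{equation*}
A\sum_{\phi,k}|P_{\phi,k}(\sigma)|^2 \le \left\|\sum_{\phi,k}P_{\phi,k}(\sigma)\,\tilde\pi_\sigma(k)T(\phi)(\sigma)\right\|_{\mathcal L}^2 \le B\sum_{\phi,k}|P_{\phi,k}(\sigma)|^2.
\end{equation*}
Integrating over $\T^r$, swapping sum and integral by Tonelli (the outer sum is finite), and applying (\ref{isometry}) in the form $\int_{\T^r}|P_{\phi,k}(\sigma)|^2 d\sigma = \sum_m|a_{\phi,k,m}|^2$ collapses both outer bounds to $\sum_{\phi,k,m}|a_{\phi,k,m}|^2$, yielding
\begin{equation*}
A\sum_{\phi,k,m}|a_{\phi,k,m}|^2 \le \left\|\sum_{\phi,k,m} a_{\phi,k,m}L_{km}\phi\right\|^2 \le B\sum_{\phi,k,m}|a_{\phi,k,m}|^2.
\end{equation*}
This is precisely the Riesz basis inequality for $E(\mathcal A)$ with constants $A$ and $B$ on finitely supported sequences; $E(\mathcal A)$ is tautologically complete in its closed linear span, and the lower bound delivers $\ell^2$-linear independence, so $E(\mathcal A)$ is a Riesz basis for its span.

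The only real subtlety I anticipate is ensuring that a single conull subset of $\T^r$ suffices simultaneously for every finitely supported $a$. This is not actually an obstacle once the hypothesis is read correctly: the fiberwise Riesz inequality at a good $\sigma$ holds uniformly in the coefficients, so the single a.e.\ statement about the fiber system already handles every $a$ at once. All remaining work is formal bookkeeping via the isometry $T$, Parseval on $\T^r$, and Tonelli.
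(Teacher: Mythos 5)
Your proposal is correct and follows essentially the same route as the paper's proof: fix a finitely supported coefficient sequence, form the trigonometric polynomials $P_{\phi,k}$, instantiate the fiberwise Riesz inequality at the coefficients $P_{\phi,k}(\sigma)$, integrate over $\T^r$, and convert via Lemma \ref{equality} and the Parseval identity (\ref{isometry}). Your closing remark about a single conull set handling all sequences simultaneously is a point the paper leaves implicit, but it is not a deviation in method.
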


\begin{proof}
Let a finitely supported sequence $\{a_{\phi,k,m}\}_{\phi\in\mathcal A, km\in\Gamma}$ be given. For each $\phi, k$, let $P_{\phi,k}$ be the trigonometric polynomial defined in Lemma \ref{equality}. Then  with  our assumptions, for almost every $\sigma\in {\T^r}$ and the  finite supported sequence $\{P_{\phi,k}(\sigma)\}_{\phi,k}=:\{b_{\phi,k}\}$ 
\begin{align}\label{riesz for $J$}
A\sum_{\phi,k} |P_{\phi,k}(\sigma)|^2 \leq \left\| \sum_{\phi,k} P_{\phi,k}(\sigma) \tilde \pi_\sigma(k) T(\phi)(\sigma) \right\|^2
\leq B \sum_{\phi,k} |P_{\phi,k}(\sigma)|^2 
\end{align}
where all the sums in above run on a finite index set of $\phi, k$. 
Integrating (\ref{riesz for $J$}) over ${\T^r}$ yields 
$$
A\sum_{\phi,k} \|P_{\phi,k}\|^2 \leq \int_{\T^r} \left\| \sum_{\phi,k} P_{\phi,k}(\sigma) \tilde \pi_{\sigma}(k) T\phi(\sigma) \right\|^2 ~d\sigma
\leq B \sum_{\phi,k} \|P_{\phi,k}\|^2  
$$
By applying Lemma \ref{equality} to the above middle sum  and substituting  (\ref{isometry})  we arrive at 
\begin{align}
A\sum_{\phi,k,m} |a_{\phi,k,m}|^2 \leq   \left\| \sum_{\phi,k,m} a_{\phi,k,m}~  L_{km}\phi\right\|^2  
\leq B \sum_{\phi,k,m} |a_{\phi,k,m}|^2
\end{align}
as desired. 
\end{proof}

\begin{theorem}\label{master and baby subspaces}
The set $E(\mathcal A)$ is a Riesz family with constants $A$ and $B$ if and only if 
 $\{\tilde \pi_\sigma(k)T(\phi)(\sigma):~~k\in \Gamma_1, ~ \phi\in \mathcal A\}$ is a Riesz family with constants $A$ and $B$, for a.e. $\sigma \in \T$. 
 
 \end{theorem}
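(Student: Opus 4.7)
The ``if'' direction is already Proposition~\ref{from babies to master}, so my plan focuses on the converse. The idea is to translate the Riesz condition on $E(\mathcal A)$ into an integrated inequality on $\T^r$ via Lemma~\ref{equality}, specialize to rank-one test polynomials to isolate a single $L^1$-function on the torus, and then localize the integrated bounds to pointwise ones via Fej\'er-kernel approximation of indicator functions.

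Concretely, combining Lemma~\ref{equality} with (\ref{isometry}), the Riesz condition on $E(\mathcal A)$ with constants $A,B$ becomes
\begin{equation*}
A\sum_{\phi,k}\|P_{\phi,k}\|_2^2 \le \int_{\T^r}\Bigl\|\sum_{\phi,k}P_{\phi,k}(\sigma)\tilde\pi_\sigma(k)T\phi(\sigma)\Bigr\|_{\mathcal L}^2\,d\sigma \le B\sum_{\phi,k}\|P_{\phi,k}\|_2^2
\end{equation*}
for every finite family of trigonometric polynomials $\{P_{\phi,k}\}$ indexed by $(\phi,k)$ in a finite $F\subseteq\mathcal A\times\Gamma_1$. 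I would then specialize to $P_{\phi,k}(\sigma)=b_{\phi,k}\,p(\sigma)$ for arbitrary scalars $(b_{\phi,k})_{(\phi,k)\in F}$ and an arbitrary trigonometric polynomial $p$, setting $g_b(\sigma):=\|\sum_F b_{\phi,k}\tilde\pi_\sigma(k)T\phi(\sigma)\|_{\mathcal L}^2$. The choice $p\equiv 1$ shows $g_b\in L^1(\T^r)$, and the inequality reduces to
\begin{equation*}
A\|p\|_2^2\sum_F|b_{\phi,k}|^2 \le \int_{\T^r}|p(\sigma)|^2 g_b(\sigma)\,d\sigma \le B\|p\|_2^2\sum_F|b_{\phi,k}|^2.
\end{equation*}

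Given measurable $E\subseteq\T^r$, I would then let $p_n=F_n\ast\chi_E$ be the Fej\'er convolutions of $\chi_E$, which are trigonometric polynomials satisfying $\|p_n\|_\infty\le 1$ and $p_n\to\chi_E$ a.e.\ and in $L^2$. Dominated convergence yields $\int|p_n|^2 g_b\to\int_E g_b$ while $\|p_n\|_2^2\to|E|$, and since $E$ is arbitrary this forces the pointwise sandwich $A\sum_F|b_{\phi,k}|^2\le g_b(\sigma)\le B\sum_F|b_{\phi,k}|^2$ for a.e.\ $\sigma$. To globalize the exceptional set across all $(F,b)$ I would pick a countable dense subset $D_F\subseteq\mathbb C^{|F|}$ for each of the countably many finite $F\subseteq\mathcal A\times\Gamma_1$, intersect the resulting countably many null sets, and extend from $b\in D_F$ to arbitrary $b\in\mathbb C^{|F|}$ by continuity of $b\mapsto\sum b_{\phi,k}\tilde\pi_\sigma(k)T\phi(\sigma)$ at each fixed $\sigma$.

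The step I anticipate as the main nuisance is precisely this simultaneous quantifier exchange: securing a single conull set of $\sigma$ on which the Riesz inequality holds for every finite $F$ and every complex coefficient vector. The Fej\'er localization is routine once the integrated inequality is in the factored form, and the translation from Lemma~\ref{equality} and (\ref{isometry}) is essentially mechanical.
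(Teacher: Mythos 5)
Your proposal is correct and follows essentially the same route as the paper: reduce to the integrated inequality for trigonometric polynomials via Lemma~\ref{equality} and (\ref{isometry}), pass to indicator-type coefficient functions by a uniformly bounded trigonometric-polynomial approximation together with dominated convergence, and localize to a.e.\ $\sigma$ using a countable dense family of coefficient vectors together with continuity in $b$. The only cosmetic differences are that you use Fej\'er means of $\chi_E$ where the paper invokes Lusin's theorem to reach general bounded measurable coefficient functions, and you deduce the pointwise bounds directly from the integrated inequality over all measurable $E$ rather than by contradiction on a bad set of positive measure.
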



\begin{proof} The proof is similar to those of \cite[Theorem 2.3, part (ii)]{B}; see also \cite[Theorem 4.3]{CP}. 

Choose any $a = \{a_{\phi, (k,m)}\}_{\phi \in \mathcal A, (k,m) \in \Gamma_1}$ having finite support.  For each $\phi, k$, let $P_{\phi,k}$ be the trigonometric polynomial defined in Lemma \ref{equality}. 

For sufficiency, suppose that $\{\tilde \pi_\sigma(k)T(\phi)(\sigma):~~k\in \Gamma_1, ~ \phi\in \mathcal A\}$ is a Riesz family with constants $A$ and $B$, for a.e. $\sigma \in \T$. Then for almost every $\sigma\in {\T}$ 
\begin{align}\label{riesz for $J$}
A\sum_{\phi,k} |P_{\phi,k}(\sigma)|^2 \leq \left\| \sum_{\phi,k} P_{\phi,k}(\sigma) \tilde \pi_\sigma(k) T(\phi)(\sigma) \right\|^2
\leq B \sum_{\phi,k} |P_{\phi,k}(\sigma)|^2 
\end{align}
where all the sums in above run on a finite index set of $\phi, k$. 
Integrating (\ref{riesz for $J$}) over ${\T}$ yields 
$$
A\sum_{\phi,k} \|P_{\phi,k}\|^2 \leq \int_{\T} \left\| \sum_{\phi,k} P_{\phi,k}(\sigma) \tilde \pi_{\sigma}(k) T\phi(\sigma) \right\|^2 ~d\sigma
\leq B \sum_{\phi,k} \|P_{\phi,k}\|^2  
$$
By applying Lemma \ref{equality} to the above middle sum  and substituting  (\ref{isometry})  we arrive at
\begin{align}
A\sum_{\phi,k,m} |a_{\phi,k,m}|^2 \leq   \left\| \sum_{\phi,k,m} a_{\phi,k,m}~  L_{km}\phi\right\|^2  
\leq B \sum_{\phi,k,m} |a_{\phi,k,m}|^2
\end{align}
as desired. 

On the other hand, suppose that  $E(\mathcal A)$ is a Riesz family in $L^2(N)$ with constants $A$ and $B$. Then for any finitely supported $\{a_{\phi,k,m}\}_{\phi\in \mathcal A, (k,m)\in \Gamma}$

\begin{align}\label{master inequality}
A\sum_{\phi,k,m} |a_{\phi,k,m}|^2\leq \left\|\sum_{\phi,k,m} a_{\phi,k,m} L_{km} \phi\right\|^2\leq B \sum_{\phi,k,m} |a_{\phi,k,m}|^2.
\end{align}
 Using Lemma \ref{equality} and (\ref{isometry}) this becomes
\begin{equation}\label{trig poly formula}
 A \int_{\T^r} \sum_{\phi,k} |P_{\phi,k}(\sigma)|^2 d\sigma \leq \int_{\T^r} \left\|\sum_{\phi,k}P_{\phi,k}(\sigma)~\tilde \pi_\sigma(k)T(\phi)(\sigma)\right\|_\mathcal L^2 ~d\sigma\leq B \int_{\T^r} \sum_{\phi,k}  |P_{\phi,k}(\sigma)|^2 d\sigma.
\end{equation}
Observe that (\ref{trig poly formula}) holds for any finite collection $\{P_{\phi,k}\}$ of trigonometric polynomials. (Note that this is correct since we can take $\{a_{\phi,(k,m)}\}$ in the definition of  $P_{\phi,k}$ on $\TT^r$ and plug the sequence back in (\ref{master inequality}) and then get  (\ref{trig poly formula}).)

Moreover, we can strengthen (\ref{trig poly formula})  as follows. Let $F$ be any finite subset of $\mathcal A \times \Gamma_1$ and for each $(\phi,k) \in F$ let $m_{\phi,k}$ be a bounded and measurable function on $\TT^r$. Then by Lusin's Theorem and density of trigonometric polynomials, for each $(\phi,k) \in F$, we have a sequence of trigonometric polynomials $(P_{\phi,k}^{i})_{i\in \N}$ such that 
 $$
 \| P_{\phi,k}^i\|_\infty \le \|m_{\phi,k}\|_\infty, \ \ \text{ for every } i \in \N, 
 $$
 and $P^i_{\phi,k} \rightarrow m_{\phi,k}$ a.e..  Note that (\ref{trig poly formula}) holds for $P^i_{\phi,k}$ for all $i\in \NN$. Hence by the Lebesgue Dominated Convergence Theorem we have that 
\begin{equation}\label{strong ineq}
 A \int_{\T^r} \sum_{\phi,k } |m_{\phi,k}(\sigma)|^2 d\sigma \leq \int_{\T^r} \left\|\sum_{\phi,k}m_{\phi,k}(\sigma)~\tilde \pi_\sigma(k)T(\phi)(\sigma)\right\|_\mathcal L^2 ~d\sigma\leq B \int_{\T^r} \sum_{\phi,k}  |m_{\phi,k}(\sigma)|^2 d\sigma
\end{equation}
 holds. 
 
 Now we must show that for a.e. $\sigma \in \T^r$, the following holds for  any $b = \{b_{\phi,k}\} \in l^2(\mathcal A \times \Gamma_1)$. 
$$ A \sum_{\phi,k} |b_{\phi,k}|^2 \le \left\| \sum_{\phi,k} b_{\phi,k} \tilde\pi_\sigma(k) T\phi(\sigma) \right\|_{\mathcal L}^2 \le B \sum |b_{\phi,k}|^2. 
 $$
    Now let $D$ be a countable dense subset of $ l^2(\mathcal A \times \Gamma_1)$ consisting of sequences of finite support. It is enough to show that for each $d \in D$, there is a conull measurable subset $E$ of $\T^r$ such that 
 $$
 A \sum_{\phi,k} |b_{\phi,k}|^2 \le \left\| \sum_{\phi,k} b_{\phi,k} \tilde\pi_\sigma(k) T\phi(\sigma) \right\|^2_{\mathcal L} \le B \sum |b_{\phi,k}|^2
 $$
 holds for every $\sigma \in E$. Suppose that this is false. Then there is a measurable subset $G$ of $\T^r$ with $|G| > 0$ and $\epsilon > 0$ such that at least one of the following holds:
 \begin{equation}\label{cont1}
 \left\| \sum_{\phi,k} b_{\phi,k} \tilde\pi_\sigma(k) T\phi(\sigma) \right\|^2_{\mathcal L} > (B + \epsilon) \sum_{\phi,k} |b_{\phi,k}|^2
 \end{equation}
 \begin{equation}\label{cont2}
 \left\| \sum_{\phi,k} b_{\phi,k} \tilde\pi_\sigma(k) T\phi(\sigma) \right\|^2_{\mathcal L} < (A - \epsilon) \sum_{\phi,k} |b_{\phi,k}|^2.
 \end{equation}
 Suppose that (\ref{cont1}) is the case, and consider the functions $m_{\phi,k} = b_{\phi,k} \mathbf 1_G$. Then 
 $$
 \begin{aligned}
  \int_{\T^r} \left\| \sum_{\phi,k} m_{\phi,k}(\sigma) \tilde\pi_\sigma(k) T\phi(\sigma) \right\|^2_\mathcal L d\sigma
 &=  \int_G \left\| \sum_{\phi,k} b_{\phi,k} \tilde\pi_\sigma(k) T\phi(\sigma) \right\|^2_\mathcal L d\sigma \\
&\ge (B + \epsilon) |G| \sum_{\phi,k} |b_{\phi,k}|^2 \\
&= (B+\epsilon) \int_{\T^r} \sum_{\phi,k} |m_{\phi,k}(\sigma)|^2 d\sigma
\end{aligned}
$$
which contradicts (\ref{strong ineq}). If  (\ref{cont2}) holds, we get a contradiction with using an analoguous  argument. 

\end{proof}

Since a Parseval frame which is also a Riesz basis must be orthonormal, we have the following.

 \begin{corollary} 
 Let $\phi\in L^{2}\left(  N\right)  $ where $N$ is SI/Z group. The system
$\left\{  L_{\gamma}\phi:~ \gamma=km\in\Gamma\right\}  $ is orthonormal iff for a.e. $\sigma$ 
the system $ \{\widetilde{\pi}_{\sigma}\left(
k\right)  T\phi\left(  \sigma\right):~  k\in   \Gamma_1\}$  is orthogonal and $\|T\phi(\sigma)\|_{\mathcal L}=1$ 

\end{corollary}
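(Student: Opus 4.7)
The plan is to deduce the corollary by combining Theorem \ref{master and baby subspaces} (the fiber characterization of Riesz families) with the elementary observation that an orthonormal system in a Hilbert space is precisely a Riesz family with constants $A=B=1$. The hint ``a Parseval frame which is also a Riesz basis must be orthonormal'' is really the observation that if
\[
\left\| \sum a_\alpha v_\alpha \right\|^2 = \sum |a_\alpha|^2
\]
holds for every finitely supported $(a_\alpha)$, then $\{v_\alpha\}$ is orthonormal (by polarization), and conversely.

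First I would apply Theorem \ref{master and baby subspaces} to the singleton family $\mathcal A = \{\phi\}$, so that $E(\mathcal A) = \{L_{km}\phi : km \in \Gamma\}$. The statement that $E(\mathcal A)$ is orthonormal is equivalent to the statement that $E(\mathcal A)$ is a Riesz family with constants $A = B = 1$; Theorem \ref{master and baby subspaces} then shows this is equivalent to
\[
\{\tilde\pi_\sigma(k) T\phi(\sigma) : k \in \Gamma_1\}
\]
being a Riesz family with constants $A=B=1$ for a.e.\ $\sigma \in \T^r$, i.e.\ orthonormal in $\mathcal L$ for a.e.\ $\sigma$.

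Finally, I would unpack ``orthonormal'' at the fiber level into the two conditions in the statement of the corollary. Since each operator $\tilde\pi_\sigma(k)$ is unitary on $\mathcal L$, we have $\|\tilde\pi_\sigma(k)T\phi(\sigma)\|_{\mathcal L} = \|T\phi(\sigma)\|_{\mathcal L}$ for every $k \in \Gamma_1$, so all vectors in the fiber system have the same norm. Hence orthonormality of the fiber system is equivalent to its pairwise orthogonality together with $\|T\phi(\sigma)\|_{\mathcal L} = 1$, which is exactly the right-hand side of the corollary. There is no substantive obstacle here; the only thing to be careful about is the standard passage from ``for each finitely supported $a$, \ldots holds a.e.\ $\sigma$'' to ``a.e.\ $\sigma$, \ldots holds for every finitely supported $a$,'' which is handled by restricting to a countable dense set of sequences just as in the proof of Theorem \ref{master and baby subspaces}.
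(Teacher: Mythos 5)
Your proposal is correct and matches the paper's (very brief) argument: the corollary is stated immediately after Theorem \ref{master and baby subspaces} with the one-line justification that a system with Riesz/frame constants $A=B=1$ is orthonormal, which is exactly your specialization of the fiber characterization to $A=B=1$ together with the unitarity of $\tilde\pi_\sigma(k)$ to reduce ``orthonormal on the fiber'' to ``orthogonal and $\|T\phi(\sigma)\|_{\mathcal L}=1$.'' Your extra care about the order of quantifiers is already absorbed into the statement of Theorem \ref{master and baby subspaces}, so nothing further is needed.
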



  \section{ Examples and Applications}\label{Examples and Applications}

  {\bf Example 1.}{ \it  The Heisenberg group}.  
Let $N$ denote the $3$ dimensional Heisenberg group. We choose a basis $\{X_1, X_2, X_3\}$ for its Lie algebra $\n$ where $[X_3,X_2]=X_1$ and other Lie brackets are zero. Thus the center of $N$ is $Z = \exp \R X_1$ and $\mathcal X = \exp \R X_2 \exp \R X_3$. With the coordinates $(x,y,z)= \exp(yX_2)\exp(xX_3)\exp(zX_1)$, we have $(x,y,z)\cdot (x',y',z') = (x+x', y+y', z + z' + xy')$. 

As is well-known, $N$ is an $SI/Z$ group and when $\z^*$ is identified with $\R$ as above, then $\Sigma = \mathbb{R}^{\ast}.$ and $|\mathbf{Pf}(\l)| = |\l|, \l \in \R^*$. It is also well-known that the Schr\"odinger representations act by a translation followed by a modulation. Specifically, for each $\l \in \R^*$, take the maximal subordinate subalgebra $\p(\l) = \R$-span$\{X_1, X_2\}$; then the corresponding irreducible induced representation acts via natural isomorphisms on functions $f$ in  $L^2(\R)$ by
$$
\bigl(\pi_\l(x,y,z) f\bigr)(t) = e^{2\pi i \l z} e^{-2\pi i \l y t} f(t - x), \ (x,y,z) \in N.
$$
Thus $\pi_\l(x,y,0) = M_{\l y} L_x$, where $L_xf(t) = f(t-x)$ and $M_vf(t) = e^{-2\pi iv t} f(t)$.





Thus for $\phi \in  L^1(N) \cap L^2(N)$, we have 
$$
\mathcal F \bigl(L_{(x,y,0)}\phi)\bigr)(\l) = M_{\l y} L_x \hat\phi(\l), \ \l \in \R^*.
$$


 
  
With above notations, we have $T(\phi)(\sigma)_j= |\sigma+j|^{1/2} \hat\phi(\sigma+j)$ for all $\phi\in L^1(N)\cap L^2(N)$, $\sigma\in \TT$, and $j\in \ZZ$. For any $a\in L^2({\T}, \mathcal L)$ and $\lambda$,  let $\tilde{\pi}_{\lambda}(k,l,m)a(\lambda):=\left( \pi_{\lambda+j}(k,l,m)\circ a(\lambda)_j\right)_j.$ Therefore 

 \begin{align}\label{intertwining-translation}
 T(L_{(k,l,m)}\phi)(\lambda)= \tilde{\pi}_\lambda(k,l,m)T(\phi)(\lambda).
 \end{align} 
 
 Take $\Gamma_1 = \exp \Z X_2 \exp \Z X_3$, so that, when $N$ is identified with $\R^2 \times \R$ by the coordinates above, $\Gamma$ is identified with the integer lattice. It is shown in \cite{F} that if $\H$ is a left-invariant subspace of $L^2(N)$ for which there is a Parseval frame of the form $L_\gamma \psi, \gamma \in \G$, then for all $\phi \in L^1(N) \cap L^2(N)$, $\mathcal F \phi$ is supported in $[-1,1]$. Thus in this case
 $T\phi(\sigma)_j = 0$ for all $j \le  -2$, and $j \ge 1$. See \cite{CM2} for an explicit example of a left-invariant subspace $\H$ and an orthonormal basis for $\H$ of the form $L_\gamma \psi, \gamma \in \G$.


 \vspace{.5cm}

\noindent
{\bf Example 2.} {\it A six-dimensional two step case }. Let $N$ be an $SI/Z$ with the following matrix realization.

\begin{align*} N=
\left\{  \left(
\begin{array}
[c]{ccccccc}%
1 & 0 & x_{2} & x_{1} & -y_{2} & -y_{1} & 2z_{2}-x_{1}y_{1}-x_{2}y_{2}\\
0 & 1 & x_{1} & x_{2} & -y_{1} & -y_{2} & 2z_{1}-x_{1}y_{2}-x_{2}y_{1}\\
0 & 0 & 1 & 0 & 0 & 0 & y_{2}\\
0 & 0 & 0 & 1 & 0 & 0 & y_{1}\\
0 & 0 & 0 & 0 & 1 & 0 & x_{2}\\
0 & 0 & 0 & 0 & 0 & 1 & x_{1}\\
0 & 0 & 0 & 0 & 0 & 0 & 1
\end{array}
\right)  :\left(
\begin{array}
[c]{c}%
z_{2}\\
z_{1}\\
y_{2}\\
y_{1}\\
x_{2}\\
x_{1}%
\end{array}
\right)  \in\mathbb{R}^{6}\right\}  ,
\end{align*}  with Lie algebra $\mathfrak{n}$ spanned by the vectors
$Z_{1},Z_{2},Y_{1},Y_{2},X_{1},X_{2}$ with the following non-trivial Lie
brackets,%
\begin{align*}
\left[  X_{1},Y_{1}\right]    & =\left[  X_{2},Y_{2}\right]  =Z_{1}\\
\left[  X_{1},Y_{2}\right]    & =\left[  X_{2},Y_{1}\right]  =Z_{2}.
\end{align*}
Here, for $f\in L^{2}\left(  N\right)  \cap L^{1}\left(  N\right)  $ we have the group Fourier transform defined as follows.
$$
\widehat f\left(  \lambda\right)  =\int_{N}f\left(  n\right)
\pi_{\lambda}\left(  n\right)  dn \text{ for all } \lambda\in \Sigma,
$$ where $\Sigma$ is identified with an open dense subset of $\mathbb{R}^{2},$ and the Plancherel measure is given by 
$$
d\mu\left(  \lambda\right)  =d\mu\left(  \lambda_{1},\lambda_{2}\right)
=\left\vert \lambda_{1}^{2}-\lambda_{2}^{2}\right\vert d\lambda_{1}%
d\lambda_{2}.
$$
For each $\lambda\in\Sigma,$ $\pi_{\lambda}$ is a corresponding irreducible
representation acting in $L^{2}\left(\mathbb{R}^{2}\right)  $ such that,
\begin{align}
& \pi_{\lambda}\left(  z_1, z_2, y_1, y_2, x_1, x_2\right)  f\left(  t_{1},t_{2}\right)
\label{rep}\\
& =e^{2\pi i\left(  z_{1}\lambda_{1}+z_{2}\lambda_{2}\right)  }e^{-2\pi
i\left(  \left\langle t,M_{\lambda}y\right\rangle \right)  }f\left(
t_{1}-x_{1},t_{2}-x_{2}\right)  ,\nonumber
\end{align}
where
$$
t=\left(
\begin{array}
[c]{c}%
t_{1}\\
t_{2}%
\end{array}
\right)  ,y=\left(
\begin{array}
[c]{c}%
y_{1}\\
y_{2}%
\end{array}
\right)  \text{ and }M_{\lambda}=\left(
\begin{array}
[c]{cc}%
\lambda_{1} & \lambda_{2}\\
\lambda_{2} & \lambda_{1}%
\end{array}
\right)  .
$$
Let
\begin{align*}
\Gamma & =\exp\left(\mathbb{Z}Z_{1}+\mathbb{Z}Z_{2}\right)  \exp\left(\mathbb{Z}Y_{1}+\mathbb{Z}Y_{2}\right)  \exp\left(\mathbb{Z}X_{1}+\mathbb{Z}X_{2}\right), \text{ and}  \\
\Gamma_1 & =\exp\left(\mathbb{Z}Y_{1}+\mathbb{Z}Y_{2}\right)  \exp\left(\mathbb{Z}X_{1}+\mathbb{Z}X_{2}\right)  .
\end{align*}
A range function here is a mapping from $\mathbb{T}^{2}$ into $l^{2}\left(\mathbb{Z}^{2},L^{2}\left(\mathbb{R}^{2}\right)  \otimes L^{2}\left(\mathbb{R}^{2}\right)  \right)  .$ 

 Given a $\Gamma$-shift-invariant subspace of $L^2(N)$. Then  $\{L_\gamma\phi : \gamma \in \Gamma\}$ is a frame with frame constant $A,B$ for the closure of its $\CC$-span  if and only if $\{\widetilde{\pi }_{\sigma}(\gamma)T\phi :\gamma \in \Gamma\}$ forms a frame with the same constants for a closed subspace $\{ T(L_k\phi)(\sigma):~ k\in \Gamma_1\}$  of $\mathcal{L}= l^{2}\left(\mathbb{Z}^{2},L^{2}\left(\mathbb{R}^{2}\right)  \otimes L^{2}\left(\mathbb{R}^{2}\right)  \right)$ for a.e $\sigma \in \mathbb{T}$. \\

\noindent
{\bf Application.}  
We will give an example of a $\Gamma$-shift-invariant space of $L^2(N)$ which is not
left-invariant. We start by defining a specific range function
$$
J:\mathbb{T}^{2}\rightarrow\left\{  \text{closed subspaces of }l^{2}\left(\mathbb{Z}^{2},L^{2}\left(\mathbb{R}^{2}\right)  \otimes L^{2}\left(\mathbb{R}^{2}\right)  \right)  \right\}  ,
$$
such that for a.e $\sigma\in\mathbb{T}^{2},J\left(  \sigma\right)
=l^{2}\left(  I,\mathcal{H}_{\sigma}\right)  $ where $I=\left\{\left(  n,n\right)  : n \in \mathbb{Z}, |n|\leq 4\right\}  $ and,
$$
\mathcal{H}_{\sigma}\mathcal{=}\overline{\text{ }\mathcal{\mathbb{C}}\text{-span}}\text{ }\left\{
\begin{array}
[c]{c}%
\pi_{\sigma}\left(  k\right)  \circ\left(  f\otimes g\right)
:f=1_{\left[  0,1/2\right)  ^{2}}\\
g=1_{\left[  0,1\right)  ^{2}},\text{and }k\in\Gamma_1%
\end{array}
\right\}  .
$$
We consider $M_{J}$ as defined earlier. It follows that $S=T^{-1}\left(
M_{J}\right)$ is $\Gamma$-shift-invariant but not left-invariant simply
because the span of  $
\left\{  \pi_{\sigma}\left( k\right)  f: ~~ k\in\Gamma_1 \right\}$
is a proper subspace of $L^{2}\left(\mathbb{R}^{2}\right).$ In fact, given $\phi\in S=T^{-1}\left(  M_{J}\right) ,$ and referring to the action of the representation defined in (\ref{rep}),
$L_{\exp\left( \frac{X_{1}}{2}+\frac{X_{2}}{2}\right)}
\phi= L_{(\frac{1}{2}, \frac{1}{2},  0, 0, 0, 0)} \phi\notin S.$ \\

 {\bf Example 4.} {\it A 3-step case}. 
Let us consider a nilpotent Lie group $N$ with its Lie algebra $\mathfrak{n}$
spanned by the vectors $\left\{Z,Y_{2},Y_{1},X_{2},X_{1}\right\}  $ so
that we have the following non-trivial Lie brackets: 
$
\left[  X_{1},Y_{1}\right]  =Y_{2},\text{ }\left[  X_{1},Y_{2}\right]
=\left[  X_{2},Y_{1}\right]  =Z.
$
Observe that the center of $\mathfrak{n}$ is the $1$-dimensional vector space
spanned by $Z.$ It can be shown that $N$ is square integrable modulo the
center and that its unitary dual is a subset of the dual of $\mathbb{R}Z$ which can be identified with $\mathbb{R}^{\ast}.$ The Plancherel measure on $\mathbb{R}^{\ast}$ is up to multiplication by a constant equal to $\lambda^{2}d\lambda.$
For each $\lambda\in\mathbb{R}^{\ast},$ the corresponding unitary representation $\pi_{\lambda}$ acts in
$L^{2}\left(\mathbb{R}^{2}\right)  $ in the following ways.
\begin{align*}
\pi_{\lambda}\left(  \exp tZ\right)  F\left(  t_{1},t_{2}\right)    &
=e^{2\pi it\lambda}F\left(  t_{1},t_{2}\right)  \\
\pi_{\lambda}\left(  \exp\left(  y_{2}Y_{2}+y_{1}Y_{1}\right)  \right)
F\left(  t_{1},t_{2}\right)    & =e^{\pi i\lambda\left(  t_{1}^{2}y_{1}%
-2t_{1}y_{2}\right)  }e^{-2\pi i\lambda t_{2}y_{1}\lambda}F\left(  t_{1}%
,t_{2}\right)  \\
\pi_{\lambda}\left(  \exp\left(  x_{2}X_{2}+x_{1}X_{1}\right)  \right)
F\left(  t_{1},t_{2}\right)    & =F\left(  t_{1}-x_{1},t_{2}-x_{2}\right)  .
\end{align*}
We use the following exponential coordinates
$
\left(  x_{1},x_{2},y_{1},y_{2},z\right) $ $=\exp\left(  x_{1}X_{1}\right)
$ $\exp\left(  x_{2}X_{2}\right) $ $ \exp\left(  y_{1}Y_{1}\right)  \exp\left(
y_{2}Y_{2}\right)  \exp\left(  zZ\right).
$ We define 
$$T:L^{2}\left(  N\right)  \rightarrow L^{2}\left(  \TT, \mathcal L     \right)  $$
with $\mathcal L= l^{2}(\ZZ,   \mathcal{HS}(  L^{2}(\mathbb{R}^2)) )$  such that for almost every
$\sigma\in\TT$  we have $T\phi\left(  \sigma\right)  $ which
is a sequence of Hilbert Schmidt operators in $\mathcal{HS}\left(
L^{2}\left(\mathbb{R}^{2}\right)  \right).$ More precisely, $T\phi\left(  \sigma\right)
_{j}= \left\vert
\sigma+j\right\vert\left(  \mathcal{F\phi}\right)\left(  \sigma+j\right) $, $j\in \ZZ$,  and
$$
T\left(  L_{\left(  x_{1},x_{2},y_{1},y_{2},z\right)}  \phi\right)  \left(
\sigma\right)  _{j}=\widetilde{\pi}_{\sigma+j}\left(  x_{1},x_{2},y_{1},y_{2}%
,z\right)  T\phi\left(  \sigma+j\right)$$  with $T\phi\left(
\sigma+j\right)  \in\mathcal{HS}\left(  L^{2}\left(\mathbb{R}^{2}\right)  \right) .$  We define the following discrete
subsets of $N$.%
\begin{align*}
\Gamma & =\left\{  \left(  k_{1},k_{2},k_{3},k_{4},m\right)  \in N:~ k_i\in \ZZ ,m\in\mathbb{Z}\right\}, \text{ and}  \\
\Gamma_1  & =\left\{  \left(  k_{1},k_{2},k_{3},k_{4},0\right)  \in
N:~ k_i\in \mathbb{Z}\right\}  .
\end{align*}
Also, we define the range function $J$ such that for almost every $\sigma\in \T  ,$ $J\left(  \sigma\right)  $ is a closed subspace of
$L^{2}\left(  \T  ,l^{2}\left(\ZZ,   \mathcal{HS}\left(
L^{2}\left(\mathbb{R}^{2}\right)  \right)  \right)  \right)  .$ The following must hold.

\begin{enumerate}
\item $S$ is a $\Gamma$-shift-invariant subspace of $L^{2}\left(  N\right)  $
if and only if for almost every $\sigma \in \T$, $J(\sigma)$ is invariant under the action of
$\widetilde{\pi}_{\sigma}\left(  k_{1},k_{2},k_{3},k_{4},0\right)  $ for all
$\left(  k_{1},k_{2},k_{3},k_{4}\right)  \in \mathbb{Z}^{4}$. Furthermore 
$$
T\left(  S\right)  =\left\{  a\in L^{2}\left( \T
,l^{2}\left( \ZZ,  \mathcal{HS}\left(  L^{2}\left(\mathbb{R}^{2}\right)  \right)  \right)  \right)  :a\left(  \sigma\right)  \in J\left(
\sigma\right)  \right\}  .
$$
\item Let $\mathcal{A}$ be a countable set in $L^{2}\left(  N\right)  $. Define $S$ such
that
$$
S=\overline{\mathbb{C}\text{-}\:\mathrm{span}}\left\{  L_\gamma  \phi:~ \gamma\in\Gamma,\phi\in
\mathcal{A}\right\}  .
$$
The system $\left\{  L_\gamma \phi:\gamma\in\Gamma,\phi\in
\mathcal{A}\right\}  $ constitutes a frame (or a Riesz basis) in $S$ with constants 
$0<A\leq B<\infty$ if and only if for almost every $\sigma\in \T ,$ the system 
$$
\left\{  \widetilde{\pi}_{\sigma}\left(  k_{1},k_{2},k_{3},k_{4},0\right)
T\phi\left(  \sigma\right)  :\left(  k_{1},k_{2},k_{3},k_{4}\right)  \in \mathbb{Z}^{4},\phi\in \mathcal A\right\}
$$
constitutes a frame (or a Riesz basis) in $J\left(  \sigma\right)  $ with the
same frame constants. 
\end{enumerate}

\vspace{1cm}

 Bradley Currey , 
 {Department of Mathematics and Computer
Science, Saint Louis University, St. Louis, MO 63103}\\
 { \footnotesize{E-mail address: \texttt{{ curreybn@slu.edu}}}\\}

  Azita Mayeli, 
   {Mathematics Department, Queensborough C. College of City University of New York, Bayside, NY 11362}\\ 
  \footnotesize{E-mail address: \texttt{{amayeli@qcc.cuny.edu}}}\\

Vignon Oussa, 
{Department of Mathematics and Computer
Science, Saint Louis University, St. Louis, MO 63103}\\
 { \footnotesize{E-mail address: \texttt{{voussa@slu.edu}}\\}


\begin{thebibliography}{99}

\bibitem{B} M. ~Bownik, 
{The structure of shift-invariant subspaces of $L^2(\RR^n)$}, {\it J. Funct. Anal.} 177 (2) (2000) 282-309.

\bibitem{CP} C. ~Cabrelli, V. ~ Paternostro, {Shift-invariant spaces on LCA groups}, {\it J. Funct. Anal.} 258 (2010) 2034--2059

\bibitem{O} O. ~Christensen, {An introduction to frames and riesz bases}, Birkh\"auser, 2002. 
 
\bibitem{CORW}  
 {L. ~Corwin, F.P. ~Greenleaf,}
 {Representations of nilpotent Lie groups and their applications Part1: Basic theory and examples} {\it Cambridge Studies in Advanced Mathematics,} {\bf Vol. 18}, Cambridge University Press Cambridge, 1990.

\bibitem{C09}  
{B. ~Currey,}
{Decomposition and multiplicities for the quasiregular representation of algebraic solvable Lie groups}, {\it Jour. Lie Th.} {\bf 19} (2009), 557 -- 612.

\bibitem{CM1}  
{B. ~Currey, A.~Mayeli}
{Gabor fields and wavelet sets for the Heisenberg group}, {\it Mon. Math.} {\bf 162} (2011), 119 -- 142.

\bibitem{CM2}  
{ B. ~Currey, A.~Mayeli}
{A density condition for interpolation on the Heisenberg group}, {\it R. M. Jour. Math.} (to appear)
 
  \bibitem{BVR1} {C. ~De Boor, R. A. ~De Vore, A. ~ Ron} 
 {Approximation from  shift-invariant subspaces of $L_2(\RR^d)$}, {\it Trans. Amer. Math. Soc.},  Vol. 341, No. 2 (1994) 
 
 \bibitem{BVR2} {C. ~De Boor, R. A. ~De Vore, A. ~ Ron} 
 {The structure of finitely generalized shift-invariant spaces in $L_2(\RR^d)$}, {\it J. Func. Anal.}, {\bf 119}, 37 - 78 (1994)
 
  \bibitem{F} {H. ~F\"uhr} 
 {Abstract Harmonic Analysis of Continuous Wavelet Transforms}, {\it Lecture Notes in Mathematics}, {\bf 1863}, Springer, 2005. 

 
 \bibitem{HL00}  
 {D. ~Han, D. ~Larsen,}
 {Frames, Bases, and Group Representations,} {\it Mem. Amer. Math. Soc.} {\bf 147}, No. 697 (2000)
 
\bibitem{H} {H. ~ Helson},
{Lectures on Invariant Subspaces}, {\it Academic Press, New York/London}, 1964

\bibitem{KR} {R. A. ~ Kamyabi Gol, R. ~ Raisi Tousi}, 
{The structure of shift-invariant spaces on a locally compact abelian group}, 
{\it J. Math. Anal. Appl.} 340 (2008) 219--225 
 
 \bibitem{M} {\sc A. ~Mayeli,}
 {Shannon multiresolution analysis on the Heisenberg group}, {\it J. Math. Anal. Appl.}  348 (2), 671-684, (2008)
 
 
  \bibitem{MW} {\sc C. ~Moore, J. Wolf,}
 {Square integrable representations of nilpotent groups}, {\it Trans. Amer. Math. Soc.} {\bf 185} (1973), 445 --462. 
 
  \bibitem{Puk} {\sc L. ~Pukanszky,}
 {On the characters and the Plancherel formula of nilpotent groups}, {\it J. Funct. Anal.} {\bf 1} (1967), 255--280. 
 
 

 


\bibitem{RS1}{\sc A. ~ Ron, Z.~ Shen,} 
{Frames and stable bases for shift-invariant subspaces of $L_2(\RR^d)$}, {\it Canad. J. Math.} {\bf 47} (1995), 1051--1094. 
\end{thebibliography}
\end{document}